\theoremstyle{plain}
\newtheorem{theorem}{Theorem}[section]
\newtheorem{lemma}[theorem]{Lemma}
\newtheorem{obs}[theorem]{Observation}
\theoremstyle{definition}
\newtheorem{defi}[theorem]{Definition}
\title{Pixelating relations and functions without adding substructures\thanks{Research supported in part by an Israel Science Foundation grant number 879/22.}}
\author{Eldar Fischer\thanks{Technion -- Israel Institute of Technology. \tt eldar@cs.technion.ac.il}}
\date{}
\begin{document}

\maketitle
\begin{abstract}\noindent
We investigate models of relations over a bounded continuous segment of real numbers, along with the natural linear order over the reals being provided as a ``hard-coded'' relation. This paper presents a generalization of a lemma from \cite{orderon}, showing that with a small amount of modification (measured in terms of the Lebesgue measure) we can replace such a model with a ``pixelated'' one that has a finite description, in a way that preserves all universally quantified statements over the relations, or in other words, without adding any new substructures.
\end{abstract}

\section{Introduction}\label{sec:intro}

Consider a vocabulary with one relation of arity $d$ (later we discuss generalizations to more relations), along with a binary relation ``$\leq$'' which is promised to be a linear order. Moreover, the order relation should be thought of as ``hard-coded'', that is, determined in advance along with the universe $U$. We will be concerned with two specific examples. The first one is the case where $U=[n]$, with $\leq$ being the natural order among the integers. The second case is where $U=\{x\in\mathbb{R}:0<x\leq 1\}$ is a finite segment of real numbers, with $\leq$ being the natural order among them (for the specific investigation here it will not matter much whether $0$ and/or $1$ themselves are contained in $U$).

We investigate the question of approximating continuous models over a segment of real numbers by discrete (finite universe) models. The notion of approximation here is tied with the Lebesgue measure, or equivalently the probability of finding a difference when uniformly drawing a tuple $(x_1,\ldots,x_d)\in U$. To be able to compare a continuous model with a discrete model over $[n]$, one would think of ``stretching'' the latter so that $i\in [n]$ will be identified with the set $\{x\in\mathbb R:(i-1)/n<x\leq i/n\}$.

However, the above idea has a problem, exemplified best by the result of drawing $x$ and $y$ from $U$. In the continuous case, there is zero probability for $x=y$, while for $U=[n]$ this probability is positive. For this reason we will use a slightly more intricate definition of a discrete approximation, one that still has a finite description, while allowing for the additional variations according to the order between $x$ and $y$ along the ``diagonal''. This is formalized in Definition \ref{def:phomo} below.

We must also define what must be preserved when moving to the approximation. Here we will preserve universally quantified sentences. This means that when moving from a model $M$ to a new model $M'$, there will be no sub-model of $M'$ that does not appear already in $M$. In fact, there will be no sub-models of $M'$ apart from those that appear with {\em positive probability} in $M$, when uniformly drawing the elements $x_1<\cdots<x_n$ from $U$ that compose the universe of the sub-model.

This approximation is what we call the ``pixelated version'' of the model for the relation. We present here a generalization of a lemma from \cite{orderon}, which was originally used there for dealing with limit objects of vertex-ordered simple graphs. We show here that for any fixed parameter $\epsilon$, a relation can be $\epsilon$-approximated by a pixelated version, namely an $l$-part homogeneous relation (as per Definition \ref{def:phomo} below). The ``fineness parameter'' $l$ can depend on the original relation itself, which is to be expected, especially that the result holds for sub-models of all finite orders $n$ at once.

For logical concepts, we refer the reader to \cite{ebbinghaus}. However, we will use surprisingly little logic in the following. Essentially we will use an ``encoding'' of the logical models using functions. For example, a model for a vocabulary that includes $r$ relations of arity $d$ over the universe $[n]$, along with the hard-coded natural order $\leq$ over $[n]$, can be represented as $r$ subsets $R_1,\ldots,R_r\subseteq [n]^d$. However, it can also be encoded using a single function $R:[n]^d\to [k]$ for $k=2^r$, where $R(i_1,\ldots,i_d)$ provides all information about which of the $R_i$ contain the tuple $(i_1,\ldots,i_d)$. We will work primarily with the later representation.

The proof here follows the general structure of the proof of the special-case lemma from \cite{orderon}, with multiple adaptations for the generalization. Some basic concepts, in particular from measure theory, are presented in Section \ref{sec:basic}. For completeness we supply proofs for the very basic lemmas that we use from measure theory. After some additional necessary definitions, we present the main result, Theorem \ref{thm:pix}, in Section \ref{sec:present}. We also demonstrate that the bulk of the work is mandated by the ``no new substructures'' requirement, by proving a very simple lemma that omits this requirement.

Section \ref{sec:homo} contains more definitions that are required for the proof mechanism. In Section \ref{sec:ramsey} we provide a proof of a Ramsey type lemma that refers to a scenario involving hypergraphs with vertices of multiple ``sorts'', along with an adaptation for the main proof. This Ramsey-type lemma is much more intricate than the lemma that is used in \cite{orderon}. This is caused by the combination of the move from graphs to higher arity relations with the requirement of selecting sufficiently many vertices of every sort (which is much easier to satisfy for the binary relation case).

Section \ref{sec:proof} finally provides the proof of Theorem \ref{thm:pix}. It is derived from the simple lemma from Section \ref{sec:present}, using a key lemma about the positive probability appearance ``discrete versions'' of pixelated functions in the function to be approximated.

The final Section \ref{sec:disc} contains a discussion of Theorem \ref{thm:pix} and some closely related (and easy to derive) variants, touching also on the role that its special case plays in \cite{orderon} for proving a removal lemma for vertex-ordered graphs using limit objects.

\section{The basics}\label{sec:basic}

Let $\mathbb{I}$ denote the interval of real numbers $\{x\in\mathbb{R}:0<x\leq 1\}$, and for any natural number $k$ let $[k]$ denote the set of integers $\{1,\ldots,k\}$. It will make no essential difference to exclude $0$ from $\mathbb{I}$, since we are concerned with the natural linear order along with the Lebesgue measure (with $\{0\}$ being a set of measure zero), and we will deal only with universally quantified sentences (and having a minimal element is not thus expressible). Not including $0$ in $\mathbb{I}$ will simplify notation later on. We also define for any $l\in\mathbb{N}$ the disjoint subintervals $\mathbb{I}_{i,l}=\{x\in\mathbb{I}:(i-1)/l< x\leq i/l\}$ where $i\in [l]$.

The notion of Lebesgue-measurable sets and functions over $\mathbb{I}^d$ plays a major role here. We refer the reader to \cite{frankslebesgue} for a primer on measure theory, although we will only use here the most basic definitions, and provide self-contained proofs for all else.

We use $\lambda(A)$ to denote the Lebesgue measure of a measurable set $A$. For two sets $A$ and $B$ we denote by $A\Delta B$ their symmetric difference. In particular we have the following well known and almost immediate consequence of the definition of measurability.

\begin{lemma}\label{lem:onepix}
For every measurable set $A\subseteq \mathbb{I}^d$ and $\epsilon$ there exists $l$, so that for every $l'\geq l$ there is a set $B$ consisting of the (disjoint) union of sets of the type $\prod_{j=1}^d \mathbb{I}_{i_j,l'}$, so that $\lambda(B\Delta A)\leq\epsilon$. 
\end{lemma}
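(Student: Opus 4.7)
The plan is to first reduce to the case where $A$ is a finite union of axis-aligned boxes, and then show that any such box can be well-approximated by grid cells at any sufficiently fine resolution. The two steps are separated because the lemma requires a \emph{single} $l$ to work for all $l'\geq l$, so the reduction in the first step must be made independently of $l'$.

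For the first step, I would invoke the outer-regularity of Lebesgue measure: by the definition of the outer measure, there is a countable cover of $A$ by open boxes with total volume at most $\lambda(A)+\epsilon/4$. Passing to a sufficiently long finite prefix of this cover (using continuity of measure) yields a finite union $R=\bigcup_{k=1}^m C_k$ of boxes with $\lambda(R\setminus A)\leq\epsilon/4$ and $\lambda(A\setminus R)\leq\epsilon/4$, so $\lambda(A\Delta R)\leq\epsilon/2$. By splitting overlaps we may assume the $C_k$ are pairwise disjoint half-open boxes of the form $\prod_{j=1}^d(a_j^{(k)},b_j^{(k)}]$; none of the later estimates depends on the $C_k$ other than through $m$ and the coordinates of their corners.

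For the second step, fix $l'$ and for each box $C_k$ let $B_k$ be the union of all grid cells $\prod_j \mathbb{I}_{i_j,l'}$ entirely contained in $C_k$. Then $C_k\setminus B_k$ is contained in a ``boundary strip'' of width $2/l'$ in each coordinate, so $\lambda(C_k\Delta B_k)\leq\sum_{j=1}^d (2/l')\prod_{j'\neq j}(b_{j'}^{(k)}-a_{j'}^{(k)}+2/l')$, which is a quantity depending on $C_k$ that tends to $0$ as $l'\to\infty$. Setting $B=\bigcup_k B_k$ (the $B_k$ are automatically disjoint because the $C_k$ are disjoint and $B_k\subseteq C_k$), the triangle inequality gives $\lambda(A\Delta B)\leq\lambda(A\Delta R)+\sum_{k=1}^m\lambda(C_k\Delta B_k)$. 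Choosing $l$ large enough that the second sum is at most $\epsilon/2$ for every $l'\geq l$ (which is possible since the bound per box is monotone in $l'$ and the number of boxes is finite) completes the proof.

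There is no real obstacle here; the only thing to watch is that the first step must fix $m$ and the boxes $C_k$ before quantifying over $l'$, which is exactly why the lemma is stated with a threshold $l$ rather than with a single grid. I would not try to be clever about the constants in the boundary-strip estimate, since any bound of the form $O_{C_k}(1/l')$ suffices after the finite sum.
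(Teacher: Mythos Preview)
Your proof is correct and follows essentially the same approach as the paper's: reduce $A$ to a finite union of boxes via outer regularity, then approximate each box by dyadic grid cells at any resolution $l'\geq l$. The only cosmetic differences are that the paper takes the grid cells \emph{intersecting} the box union (an outer approximation) rather than those \emph{contained} in it, and therefore does not bother making the boxes disjoint; the error analysis and the choice of $l$ are otherwise identical in spirit.
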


\begin{proof}
By definition of the measurability of $A$, there exists a set $B_0$ such that $A\subset B_0$, where $B_0$ is the union of countably many (not necessarily disjoint) ``boxes'' $C_1,C_2,\ldots$, with every $C_j$ being of the form $\{(x_1,\ldots,x_d)\in\mathbb{I}^d:\bigwedge_{i=1}^d\alpha_{j,i}\leq x_i\leq \beta_{j,i}\}$, and such that the sum of measures $\sum_j\prod_{i=1}^d(\beta_{j,i}-\alpha_{j,i})$ is at most $\lambda(A)+\epsilon/3$. Now we set $t$ so that $\sum_{j>t}\prod_{i=1}^d(\beta_{j,i}-\alpha_{j,i})\leq \epsilon/3$ (such a $t$ exists because the whole sum is finite), and set $B_1=\bigcup_{j\leq t}C_j$. Clearly $\lambda(A\setminus B_1)\leq\epsilon/3$ (since $B_0$ contains $A$ and $B_0\setminus B_1\subseteq\bigcup_{j>t}C_j$), and also $\lambda(B_1\setminus A)\leq\epsilon/3$ (since $\lambda(B_0\setminus A)\leq\epsilon/3)$, hence $\lambda(A\Delta B_1)\leq 2\epsilon/3$.

Now we set $l=\lceil 6dt/\epsilon\rceil$. For any $l'\geq l$ , we define $B$ to be the (disjoint) union of all sets $\prod_{j=1}^d\mathbb{I}_{i_j,l'}$ that have a non-empty intersection with $B_1$. Note that $B_1\subseteq B$. Also, for any set $C$ of the form $\{(x_1,\ldots,x_d)\in\mathbb{I}^d:\bigwedge_{i=1}^d\alpha_i\leq x_i\leq \beta_i\}$, the measure of the union of all sets of the type $\prod_{j=1}^d\mathbb{I}_{i_j,l'}$ that are not contained in $C$ and yet have a non-empty intersection with it is at most $2d/l'$. Hence $\lambda(B\setminus B_1)\leq 2dt/l'\leq \epsilon/3$, and so $\lambda(A\Delta B)\leq \epsilon$.
\end{proof}

We make the natural identification of the Lebesgue measure over $\mathbb{I}^d$ with the uniform probability space over this set, so in particular the legitimate events over this probability space are exactly the measurable sets, and for any such set $A$ we identify $\mathrm{Pr}[A]$ with $\lambda(A)$. Another almost immediate consequence is the following.

\begin{lemma}\label{lem:densecube}
If $A$ is a positive probability event over $\mathbb{I}^d$, then for every $\epsilon>0$ there exists a ``box'' set $C=\{(x_1,\ldots,x_d)\in\mathbb{I}^d:\bigwedge_{i=1}^d\alpha_{i}\leq x_i\leq \beta_{i}\}$ (which in particular is also a legitimate event) for which $\mathrm{Pr}[A|C]\geq 1-\epsilon$.
\end{lemma}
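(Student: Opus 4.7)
The plan is to combine Lemma \ref{lem:onepix} with a simple averaging argument: if a pixelated approximation of $A$ is close enough in measure, then at least one of its atomic boxes must be almost entirely contained in $A$, and that box is the desired $C$.

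More concretely, I would first choose a parameter $\delta>0$ to be fixed later (it will end up being something like $\epsilon\lambda(A)/3$). Apply Lemma \ref{lem:onepix} with error $\delta$ to obtain some $l$ and a set $B=\bigsqcup_{k=1}^N C_k$, where each $C_k$ is a product box of the form $\prod_{j=1}^d\mathbb{I}_{i_j,l}$, satisfying $\lambda(A\Delta B)\leq\delta$. Because $\delta$ will be chosen much smaller than $\lambda(A)$, the set $B$ has positive measure and hence is nonempty.

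The key step is the averaging argument on the finite collection $\{C_k\}$. Suppose for contradiction that $\Pr[A\mid C_k]<1-\epsilon$ for every $k$; then
\[
\lambda(A\cap B)=\sum_{k=1}^N\lambda(A\cap C_k)<(1-\epsilon)\sum_{k=1}^N\lambda(C_k)=(1-\epsilon)\lambda(B).
\]
On the other hand, $\lambda(A\cap B)\geq\lambda(A)-\lambda(A\setminus B)\geq\lambda(A)-\delta$ and $\lambda(B)\leq\lambda(A)+\delta$. Combining these two inequalities yields $\lambda(A)-\delta<(1-\epsilon)(\lambda(A)+\delta)$, which simplifies to $\epsilon\lambda(A)<(2-\epsilon)\delta$. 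Choosing $\delta$ small enough (for instance $\delta<\epsilon\lambda(A)/2$) at the start produces the desired contradiction, so some $C_k$ must satisfy $\Pr[A\mid C_k]\geq 1-\epsilon$, and we take $C:=C_k$.

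There is no real obstacle here beyond being careful with the choice of $\delta$ in terms of $\lambda(A)$ and $\epsilon$; the whole argument is essentially a finite-box version of the Lebesgue density theorem, with Lemma \ref{lem:onepix} doing all of the measure-theoretic work.
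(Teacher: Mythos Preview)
Your proof is correct. The paper's proof also rests on an averaging argument over boxes, but it takes a slightly more direct route: instead of invoking Lemma~\ref{lem:onepix}, it appeals straight to the outer-measure definition of $\lambda(A)$ to produce a countable cover $A\subset\bigcup_j C_j$ by (possibly overlapping) boxes with $\sum_j\lambda(C_j)\leq(1+\epsilon)\lambda(A)$, and then observes that since $\sum_j\lambda(A\cap C_j)\geq\lambda(A)$, some single $C_j$ must satisfy $\lambda(C_j)\leq(1+\epsilon)\lambda(A\cap C_j)$, i.e.\ $\Pr[A\mid C_j]\geq 1/(1+\epsilon)\geq 1-\epsilon$. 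Your route through Lemma~\ref{lem:onepix} yields finitely many disjoint equal-sized boxes, which makes the averaging step tidy but requires the auxiliary parameter $\delta$ tuned against $\lambda(A)$; the paper's route avoids that bookkeeping at the cost of working with a countable overlapping family. Both arguments are equally elementary and of the same flavor.
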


\begin{proof}
Similarly to the proof of Lemma \ref{lem:onepix}, we start with a set $B$ such that $A\subset B$, where $B$ is the union of countably many (not necessarily disjoint) ``boxes'' $C_1,C_2,\ldots$ with $C_j=\{(x_1,\ldots,x_d)\in\mathbb{I}^d:\bigwedge_{i=1}^d\alpha_{j,i}\leq x_i\leq \beta_{j,i}\}$, satisfying $\sum_j\lambda(C_j)\leq (1+\epsilon)\lambda(A)$ (we use the assumption that $\lambda(A)>0$ when we pick ``$\epsilon\lambda(A)$'' as the additive term). This in particular means that $\sum_j\lambda(C_j)\leq (1+\epsilon)\sum_j\lambda(A\cap C_j)$, so there exists a particular $j$ for which $\lambda(C_j)\leq (1+\epsilon)\lambda(A\cap C_j)$. Hence $\mathrm{Pr}[A|C_j]=\lambda(A\cap C_j)/\lambda(C_j)\geq 1-\epsilon$ as required.
\end{proof}

We will work with ``combinatorial models'' of relations. For example, a model of an arity $d$ relation over the universe $[n]$ would be a set $R\subseteq [n]^d$, or alternatively a function $R:[n]^d\to\{0,1\}$. A relation of arity $k<d$ can still be represented by a function over $[n]^d$, by making it invariant of the last $n-k$ coordinates.

However, for our purposes (and convenience) we would like to deal with a single function, rather than a function for every relation. To achieve this we allow a larger range, so our main object would be a function $R:[n]^d\to [k]$ for some constant $k$, where for $(i_1,\ldots,i_d)\in [n]^d$ the value $R(i_1,\ldots,i_d)$ provides all information about the existence of this tuple in all relations. For example, for a vocabulary having exactly $r$ relations of arity $d$, we would correspondingly have $k=2^r$, and we would use a correspondence between the values in $[k]$ and the vectors in $\{0,1\}^r$. Thus the single value $R(i_1,\ldots,i_d)$ would tell us which of the relations contain the tuple $(i_1,\ldots,i_d)$.

Another important point is that we maintain the natural order over $[n]$ as a fixed order of the universe. The bulk of this work is about transforming a model in a way that maintains sentences with universal quantifiers, which translates to transforming a model in a way that does not add any new order-preserving ``substructures'' that did not exist in the original model.

We will deal with the scenario where the universe is not finite, and is not even countable. Specifically, we deal with functions $F:\mathbb{I}^d\to [k]$. The main theme of this work is to show that these can be approximated by ``essentially finite'' models, but first we need more clarifications.

We will only deal with measurable functions, that is, functions $F:\mathbb{I}^d\to [k]$ for which the preimage set $F^{-1}(i)$ is Lebesgue-measurable for every $i\in [k]$. In particular, for the uniform probability distribution over $\mathbb{I}^d$, every set $F^{-1}(i)$ corresponds to a legitimate probabilistic event ``$F(x_1,\ldots,x_d)=i$''. We will use the Hamming distance between functions.

\begin{defi}
For two measurable functions $F,G:\mathbb{I}^d\to [k]$, the {\em distance} between them is $d(F,G)=\mathrm{Pr}[F(x_1,\ldots,x_d)\neq G(x_1,\ldots,x_d)]$, where $(x_1,\ldots,x_d)$ is drawn uniformly from $\mathbb{I}^d$. Alternatively we can write $d(F,G)=\lambda(A)$, where $A=\{(x_1,\ldots,x_d)\in\mathbb{I}^d:F(x_1,\ldots,x_d)\neq G(x_1,\ldots,x_d)\}$ is the set of differences.
\end{defi}

Given a function $F:\mathbb{I}^d\to [k]$, we can go back and pick functions $R:[n]^d\to [k]$ by picking values $0<x_1<\cdots<x_n\leq 1$ for the coordinates, and then defining $R$ by the appropriate restriction. By using a uniformly random choice for $0<x_1<\cdots<x_n\leq 1$ we obtain a probability distribution over these functions.

\begin{defi}\label{def:stat}
Given a measurable function $F:\mathbb{I}^d\to [k]$, the {\em $[n]^d$-statistic distribution} is the (finite) probability distribution $\mu_{F,n}$ over functions $R:[n]^d\to [k]$ that results from picking $0<x_1<\cdots<x_n\leq 1$ uniformly from the set of all such sequences, and then setting $R(i_1,\ldots,i_d)=F(x_{i_1},\ldots,x_{i_d})$ for every $(i_1,\ldots,i_d)\in [n]^d$.
\end{defi}

Note that in the above definition, a uniform choice of $0<x_1<\cdots<x_n\leq 1$ can be obtained by picking $(y_1,\ldots,y_n)\in\mathbb{I}^n$ uniformly (and independently) at random, and then setting $x_j$ to be the $j$'th smallest value among $y_1,\ldots,y_n$ for every $j\in [n]$ (here and in other places we ignore the probability zero event that $y_j=y_{j'}$ for some $1\leq j<j'\leq n$).

We also define what it means for a certain $R$ to appear in $F$, with or without positive probability.

\begin{defi}\label{defi:app}
For a measurable function $F:\mathbb{I}^d\to [k]$ and a function $R:[n]^d\to [k]$, we say that {\em $R$ appears in $F$} if there exist $0<x_1<\cdots<x_n\leq 1$ so that $F(x_{i_1},\ldots,x_{i_d})=R(i_1,\ldots,i_d)$ for every $(i_1,\ldots,i_d)\in [n]^d$. We say that {\em $R$ appears in $F$ with positive probability} if we additionally have $\mu_{F,n}(R)>0$, where $\mu_{F,n}$ is the $[n]^d$-statistic probability distribution.
\end{defi}

For the analysis, we will also define an appearance of $R$ in a discrete $S:[m]^d\to [k]$ (in this setting one should think of all appearances as being with positive probability).

\begin{defi}\label{defi:intapp}
For a (discrete) function $S:[m]^d\to [k]$ and a function $R:[n]^d\to [k]$, we say that {\em $R$ appears in $S$} if there exist $1\leq j_1<\cdots<j_n\leq m$ so that $S(j_{i_1},\ldots,j_{i_d})=R(i_1,\ldots,i_d)$ for every $(i_1,\ldots,i_d)\in [n]^d$.
\end{defi}

\section{Presentation of the main result}\label{sec:present}

We will be particularly interested in functions $F:\mathbb{I}^d\to [k]$ that ``do not use the infiniteness of $\mathbb{I}$''. The definition follows.

\begin{defi}\label{def:phomo}
A function $F:\mathbb{I}^d\to [k]$ is called {\em $l$-part homogeneous} if $F(x_1,\ldots,x_d)$ depends only on $\lceil lx_1 \rceil,\ldots,\lceil lx_d \rceil$ and on the order of $x_1,\ldots,x_d$ (i.e.\ whether $x_j\leq x_{j'}$ and/or $x_j\geq x_{j'}$ for every $1\leq j<j'\leq d$).
\end{defi}

In other words, given $(i_1,\ldots,i_d)\in [l]^d$, the values of $F(x_1,\ldots,x_d)$ inside $\prod_{j=1}^d\mathbb{I}_{i_j,l}$ depend only on the order of $x_1,\ldots,x_d$. Note also that whenever $i_j<i_{j'}$ we have $x_j<x_{j'}$ unconditionally, so the order of $x_1,\ldots,x_d$ is non-determined only when we have $j\neq j'$ for which $i_j=i_{j'}$. The following two observations are quite immediate.

\begin{obs}
If $F:\mathbb{I}^d\to [k]$ is $l$-part homogeneous, then every $R:[n]^d\to [k]$ that appears in $F$, appears in it with positive probability, and in particular $\mu_{F,n}(R)\geq 1/l^nn!$. \qed
\end{obs}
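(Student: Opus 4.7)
The plan is to witness a whole positive-measure set of sequences that all give rise to the same $R$, by using $l$-part homogeneity to collapse the condition to matching a single ``bucket pattern''. Let $0 < x_1^* < \cdots < x_n^* \leq 1$ be the witness whose existence is guaranteed by ``$R$ appears in $F$'', and set $b_j = \lceil l x_j^* \rceil \in [l]$, so that $b_1 \leq b_2 \leq \cdots \leq b_n$. First I would observe that for \emph{any} sequence $0 < x_1 < \cdots < x_n \leq 1$ with $\lceil l x_j \rceil = b_j$ for every $j \in [n]$, the resulting restriction is again $R$. This is because for each tuple $(i_1,\ldots,i_d)\in [n]^d$ the ``bucket coordinates'' $(\lceil l x_{i_1}\rceil,\ldots,\lceil l x_{i_d}\rceil)=(b_{i_1},\ldots,b_{i_d})$ agree with those of the witness, and the order pattern of $x_{i_1},\ldots,x_{i_d}$ is entirely determined by the indices $i_1,\ldots,i_d$ (since $x_1<\cdots<x_n$ are strictly increasing), so it too agrees with that of the witness; by Definition~\ref{def:phomo}, $F$ yields the same value on both tuples.

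It remains to lower bound the probability of the event $E=\{\lceil l x_j\rceil=b_j\text{ for all }j\}$ under $\mu_{F,n}$. I would use the equivalent description of $\mu_{F,n}$ as the Lebesgue measure on the simplex $\{0<x_1<\cdots<x_n\leq 1\}$ renormalized by the factor $n!$. Grouping the $b_j$'s into maximal blocks of equal values with multiplicities $m_c = |\{j : b_j = c\}|$ for $c\in[l]$, the event $E$ restricted to the simplex factorizes: across distinct values of $c$ the ordering constraint $x_j<x_{j+1}$ is automatic (since $b_j<b_{j+1}$ forces $x_j\leq b_j/l<(b_{j+1}-1)/l<x_{j+1}$), while within a block of $m_c$ indices mapped to the bucket $\mathbb{I}_{c,l}$ of length $1/l$, the ordered-tuple set has Lebesgue measure $(1/l)^{m_c}/m_c!$. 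Multiplying these and the factor $n!$ gives
\[
\mu_{F,n}(R) \;\geq\; \frac{n!}{l^n \prod_{c\in[l]} m_c!} \;\geq\; \frac{1}{l^n},
\]
which is comfortably stronger than the claimed bound $1/(l^n n!)$.

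There is no real obstacle here; the only point that deserves a moment of care is the within-bucket ordering when several $b_j$'s coincide, which introduces the $m_c!$ factors but does not affect the final inequality since $\prod_c m_c! \leq n!$.
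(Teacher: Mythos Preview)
Your argument is correct. The paper states this observation without proof (the trailing \qed\ signals it is left to the reader), so there is no approach to compare against; your bucket-pattern argument is exactly the intended one-liner fleshed out, and you in fact obtain the sharper bound $\mu_{F,n}(R)\geq n!/(l^n\prod_c m_c!)\geq 1/l^n$ rather than the paper's looser $1/(l^n n!)$.

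One cosmetic nit: in the sentence ``$b_j<b_{j+1}$ forces $x_j\leq b_j/l<(b_{j+1}-1)/l<x_{j+1}$'', the middle inequality is not strict when $b_{j+1}=b_j+1$; it should read $x_j\leq b_j/l\leq (b_{j+1}-1)/l<x_{j+1}$, which still gives $x_j<x_{j+1}$ as you need.
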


\begin{obs}\label{obs:fin}
For every $l$, $d$ and $k$ there is only a finite number of possible $l$-part homogeneous functions $F:\mathbb{I}^d\to [k]$. \qed
\end{obs}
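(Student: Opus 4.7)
The plan is to argue that an $l$-part homogeneous function is determined by a finite amount of combinatorial data, and then to bound the number of choices for that data.

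First I would unpack the definition: by $l$-part homogeneity, the value $F(x_1,\ldots,x_d)$ depends only on the tuple $(\lceil lx_1\rceil,\ldots,\lceil lx_d\rceil)\in [l]^d$ together with the order type of $(x_1,\ldots,x_d)$, where an order type records for each pair $(j,j')$ with $j<j'$ whether $x_j<x_{j'}$, $x_j=x_{j'}$, or $x_j>x_{j'}$. Hence $F$ is determined by its induced map $\Phi:[l]^d\times T\to [k]$, where $T$ denotes the set of possible order types on $d$ elements.

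Next I would bound $|T|$. A crude bound suffices: since each of the $\binom{d}{2}$ unordered pairs carries one of three possible relations, we have $|T|\leq 3^{\binom{d}{2}}$ (the precise count is the $d$-th ordered Bell number, but any finite bound is enough). Consequently, the domain of $\Phi$ has size at most $l^d\cdot 3^{\binom{d}{2}}$, and the number of possible maps $\Phi$ into $[k]$ is at most $k^{l^d\cdot 3^{\binom{d}{2}}}$, which is finite. Since distinct $l$-part homogeneous functions must induce distinct $\Phi$'s, this bounds the number of $l$-part homogeneous $F:\mathbb{I}^d\to[k]$.

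There is no real obstacle here; the only subtle point worth mentioning is that not every pair in $[l]^d\times T$ is realizable (if $i_j<i_{j'}$ then forcibly $x_j<x_{j'}$, so only compatible order types occur together with a given interval tuple). This consistency constraint merely restricts the effective domain of $\Phi$ and can only decrease the count, so it has no bearing on finiteness. Any convention for $\Phi$ on the incompatible inputs (e.g.\ setting it to a fixed default value) yields the same bound.
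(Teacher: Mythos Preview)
Your argument is correct; the paper itself provides no proof at all (the observation is marked with \qed\ and left as immediate from the definition), so your unpacking of the definition and the crude bound $k^{l^d\cdot 3^{\binom{d}{2}}}$ is exactly the kind of finiteness check the author is implicitly relying on. Your remark about unrealizable pairs in $[l]^d\times T$ is a nice clarification but, as you note, irrelevant to the claim.
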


It is almost immediate from Lemma \ref{lem:onepix} that measurable functions can be approximated by $l$-part homogeneous functions for $l$ large enough (that may depend on the function itself).

\begin{lemma}\label{lem:allpix}
For every measurable function $F:\mathbb{I}^d\to [k]$ and $\epsilon>0$ there exists $t$ (that may depend on $F$ and $\epsilon$), so that for every $l\geq t$ there exists an $l$-part homogeneous function $G:\mathbb{I}^d\to [k]$ for which $d(F,G)\leq\epsilon$.
\end{lemma}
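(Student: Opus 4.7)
The plan is to apply Lemma \ref{lem:onepix} separately to each of the $k$ preimage sets $F^{-1}(i)$, and then stitch the resulting box-approximations into a single $l$-part homogeneous function. Specifically, for each $i\in[k]$, Lemma \ref{lem:onepix} applied to $F^{-1}(i)$ with error parameter $\epsilon/(2k)$ yields a threshold $l_i$; I set $t=\max_i l_i$, so that for every $l\geq t$ there are sets $B_1,\ldots,B_k\subseteq\mathbb{I}^d$, each a union of cells of the form $\prod_{j=1}^d\mathbb{I}_{i_j,l}$, with $\lambda(B_i\Delta F^{-1}(i))\leq\epsilon/(2k)$.

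I would then define $G:\mathbb{I}^d\to[k]$ to be constant on each cell $B=\prod_{j=1}^d\mathbb{I}_{i_j,l}$ as follows: if $B\subseteq B_i$ for some $i$, pick any such $i$ and set $G\equiv i$ on $B$ (this is well-defined since each $B_i$ is a union of these cells, so $B\cap B_i$ is either $B$ or empty); otherwise assign $G$ any fixed default value, say $1$. Since $G$ depends only on $(\lceil lx_1\rceil,\ldots,\lceil lx_d\rceil)$, with no dependence on the internal order of the $x_j$, it is trivially $l$-part homogeneous.

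To bound $d(F,G)$, I would split the disagreement set according to the type of cell. On any cell $B$ where $G\equiv i$ was assigned because $B\subseteq B_i$, the disagreement $\{x\in B:F(x)\neq i\}$ lies inside $B_i\setminus F^{-1}(i)$, so summing over all such cells contributes at most $\sum_i\lambda(B_i\setminus F^{-1}(i))\leq k\cdot\epsilon/(2k)=\epsilon/2$. For cells $B$ not contained in any $B_i$, the cell-alignment of each $B_i$ forces $B$ to be disjoint from $\bigcup_i B_i$, so the total measure of such cells equals $\lambda(\mathbb{I}^d\setminus\bigcup_i B_i)$; writing $\mathbb{I}^d$ as the disjoint union of the $F^{-1}(i)$, this is bounded by $\sum_i\lambda(F^{-1}(i)\setminus B_i)\leq\epsilon/2$. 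Even in the worst case where $G$ disagrees with $F$ everywhere on these leftover cells, the contribution is at most $\epsilon/2$, so overall $d(F,G)\leq\epsilon$.

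The only subtle point (rather than a genuine obstacle) is controlling the uncovered region $\mathbb{I}^d\setminus\bigcup_i B_i$, which could in principle be an adversarial set; it is resolved cleanly by the disjointness of the preimages together with the one-sided bounds from Lemma \ref{lem:onepix}. Everything else is bookkeeping.
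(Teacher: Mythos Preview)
Your proof is correct and follows essentially the same approach as the paper: apply Lemma~\ref{lem:onepix} to each preimage $F^{-1}(i)$, take the maximum threshold, and assemble a function that is constant on each cell $\prod_j \mathbb{I}_{i_j,l}$ (which is indeed stronger than $l$-part homogeneous, as the paper also notes). The only cosmetic differences are that the paper uses error parameter $\epsilon/k$ and assigns a value $i$ to a cell only when it meets \emph{exactly one} $B_i$, bounding the disagreement in one stroke via ``$F(x)\neq G(x)$ implies $x\in A_i\Delta B_i$ for some $i$''; you instead use $\epsilon/(2k)$, assign any $i$ with $B\subseteq B_i$, and split the error into ``covered'' and ``uncovered'' cells.
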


\begin{proof}
In fact we will prove something stronger, and approximate $F$ by a function $G$ that is completely constant over any set of the type $\prod_{j=1}^d\mathbb{I}_{i_j,l}$. Denoting the preimage sets by $A_i=F^{-1}(i)$ for every $1\leq i\leq k$, we set $t=\max\{l_1,\ldots,l_k\}$ where $l_i$ is provided by Lemma \ref{lem:onepix} given $A_i$ (as $A$) with $\epsilon/k$ instead of $\epsilon$. Then we use Lemma $\ref{lem:onepix}$ for every $A_i$ with $l\geq t\geq l_i$ to obtain $B_i$ for which the measure of $B_i\Delta A_i$ is at most $\epsilon/k$.

To define $G$, we define the function $R:[l]^d\to [k]$ for which $G(x_1,\ldots,x_d)=R(i_1,\ldots,i_d)$ whenever $x_j\in\mathbb{I}_{i_j,l}$ for every $j\in [d]$. For every $(i_1,\ldots,i_d)\in [l]^d$, we look at the set $\{i:\prod_{j=1}^d\mathbb{I}_{i_j,l}\cap B_i\neq\emptyset\}$. If this is a set of size one, we define $R(i_1,\ldots,i_d)$ to be equal to its sole member, and otherwise we take an arbitrary value for $R(i_1,\ldots,i_d)$. Finally, we note that if $F(x_1,\ldots,x_d)\neq G(x_1,\ldots,x_d)$, then there exists some $i$ for which $(x_1,\ldots,x_d)\in A_i\Delta B_i$ (otherwise, since the sets $A_i\cap B_i$ are all disjoint, $(x_1,\ldots,x_d)$ would have belonged to exactly one of them). This implies that $\lambda(\{(x_1,\ldots,x_d):F(x_1,\ldots,x_d)\neq G(x_1,\ldots,x_d)\})\leq\sum_{i=1}^k\lambda(A_i\Delta B_i)\leq\epsilon$, as required.
\end{proof}

The main result here states that a measurable function can not only be approximated by a homogeneous one, but that this can be done in a way that does not introduce any ``new artifacts'' into this function, in terms of which $R:[n]^d\to [k]$ appear in it (for all $n$ at once).

\begin{theorem}\label{thm:pix}
For every measurable function $F:\mathbb{I}^d\to [k]$ and $\epsilon>0$ there exists an $l$-part homogeneous function $G:\mathbb{I}^d\to [k]$ (for some $l$ that may depend on $F$ and $\epsilon$) for which $d(F,G)\leq\epsilon$, and furthermore satisfying that every function $R:[n]^d\to [k]$ (for any natural number $n$) that appears in $G$ already appears with positive probability in $F$.
\end{theorem}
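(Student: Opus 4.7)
\medskip
\noindent\textbf{Proof plan.}
My plan is to combine Lemma~\ref{lem:allpix} with a Ramsey-type sparsification: the lemma alone gives an $l$-part homogeneous approximation, but this approximation can a priori introduce substructures not present in $F$, and the sparsification is what rules this out. First I would apply Lemma~\ref{lem:allpix} with a parameter $\epsilon'\ll\epsilon$ to obtain an $L$-part homogeneous $H$ with $d(F,H)\leq\epsilon'$, for some very large $L$. Such $H$ is encoded by a discrete template $S_H:[L]^d\to[k]$ (together with tie-breaking data on repeated coordinates), and because $\epsilon'$ is tiny, in most cells $\prod_{j=1}^d\mathbb{I}_{i_j,L}$ the function $F$ agrees with $S_H(i_1,\ldots,i_d)$ on an overwhelming-majority sub-box of the cell.

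The crucial step is then to apply the Ramsey-type lemma of Section~\ref{sec:ramsey} to select indices $1\leq j_1<\cdots<j_l\leq L$ for which the induced sub-template $S:[l]^d\to[k]$, defined by $S(a_1,\ldots,a_d)=S_H(j_{a_1},\ldots,j_{a_d})$, already appears with positive probability in $F$ in the sense of Definition~\ref{defi:app}. Given such $j_1,\ldots,j_l$, I would define $G$ to be the $l$-part homogeneous function whose template is $S$, by identifying the $l$ chosen blocks with the equal subintervals $\mathbb{I}_{1,l},\ldots,\mathbb{I}_{l,l}$. The bound $d(F,G)\leq\epsilon$ should follow from the initial $\epsilon'$-closeness together with the concentration inherited by the Ramsey-selected sub-template. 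The substructure-preservation is then almost automatic: any $R:[n]^d\to[k]$ appearing in $G$ is obtained from $S$ via a monotone map $[n]\to[l]$ (the map recording which of the $l$ blocks of $\mathbb{I}$ each witness falls into), so a positive-measure embedding of $S$ into $F$ yields, via a Lebesgue density argument inside each selected block, a positive-measure embedding of $R$ as well.

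The hard part will be the Ramsey selection, which must simultaneously (a)~keep $d(F,G)$ below $\epsilon$, and (b)~guarantee that every ``sort'' of $d$-tuple (i.e.\ every possible pattern of repetitions and intra-block ordering among the $d$ coordinates) is realized by $F$ on a positive-measure box inside the chosen cells. Balancing these requirements across all tuple sorts simultaneously is the technical heart of the argument, and is presumably why the Ramsey lemma here has to be substantially more elaborate than the binary-relation version from \cite{orderon}.
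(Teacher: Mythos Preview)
Your plan has a genuine gap in the ``substructure preservation'' step, and it is precisely the step you flag as automatic. Suppose you have selected $j_1<\cdots<j_l$ and shown that the template $S:[l]^d\to[k]$ appears with positive probability in $F$, witnessed (via Lemma~\ref{lem:densecube}) by small boxes $J_1,\ldots,J_l\subset\mathbb{I}$. Now take any $R:[n]^d\to[k]$ appearing in $G$, described by a monotone map $\phi:[n]\to[l]$. For a tuple $(i_1,\ldots,i_d)$ in which all $\phi(i_j)$ are distinct, your density argument does let you match $R(i_1,\ldots,i_d)$ with $F(z_{i_1},\ldots,z_{i_d})$ for $z_i\in J_{\phi(i)}$. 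But as soon as two indices $i<i'$ satisfy $\phi(i)=\phi(i')=a$, the value $R(i_1,\ldots,i_d)$ is determined by the \emph{tie-breaking data} of $G$ at block $a$, while the value $F(z_{i_1},\ldots,z_{i_d})$ for $z_i,z_{i'}\in J_a$ is completely uncontrolled: the statement ``$S$ appears in $F$'' only constrains $F$ at one point per block, and Lebesgue density of a single point tells you nothing about $F(u,v,\ldots)$ for two nearby points $u<v$ in $J_a$. So the implication ``$R$ appears in $G$ $\Rightarrow$ $R$ appears in $F$'' simply does not follow for $n>l$. (There is a secondary issue: after stretching $l$ blocks of width $1/L$ to cover all of $\mathbb{I}$, nothing in a Ramsey selection forces $d(F,G)\leq\epsilon$; that would require an averaging argument, not a Ramsey one.)

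The paper's proof handles exactly this obstruction by reversing the roles in your Ramsey step. It fixes $l$ once (from Lemma~\ref{lem:allpix}) and never reselects blocks; instead, for every $s\geq d$ it applies the multi-sorted Ramsey lemma \emph{inside} each of the $l$ intervals to produce an $l$-part homogeneous $R':[sl]^d\to[k]$ that is an $s$ over $[l]^d$ inlay of $F$ with positive probability (Lemma~\ref{lem:appclose}, via Lemma~\ref{lem:probinlay} and Lemma~\ref{lem:densecube}). The distance bound survives because the block structure is unchanged. Then, since there are only finitely many $l$-part homogeneous $G'$, some single $G'$ is compatible with such an $R'$ for infinitely many $s$; this is what certifies, for every $n$, that all $R:[n]^d\to[k]$ appearing in $G'$ already appear in $F$ with positive probability. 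In short: you need arbitrarily many witnesses per block, not a selection of blocks, and the paper obtains them by a Ramsey-within-blocks argument iterated over $s$ followed by a pigeonhole over the (finite) set of possible $G'$.
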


It would have been nice to show this without the dependency on the order of $x_1,\ldots,x_n$ (see Definition \ref{def:phomo}), but this cannot be done. Consider for example $F:\mathbb{I}^2\to [3]$ that is defined by $F(x_1,x_2)=1$ if $x_1<x_2$, $F(x_1,x_2)=2$ if $x_1=x_2$, and $F(x_1,x_2)=3$ if $x_1>x_2$. Any $l$-part homogeneous function $G$ that has no dependency on the order of $x_1$ and $x_2$ would have a positive probability appearance of some $R:[2]^2\to [3]$ which is completely constant, while for $F$ such an $R$ does not exist (the all-$2$ function ``appears'' in $F$ only if we allow equality of the coordinates, $x_1=x_2$, and even then it appears with zero probability).

\section{Homogeneous functions and inlays}\label{sec:homo}

For the proof of Theorem \ref{thm:pix} we will analyze discrete versions of homogeneous functions, and their appearance with positive probability in the original function. The following is their definition.

\begin{defi}\label{def:intphomo}
For $l$ and $s\geq d$, a function $R:[sl]^d\to [k]$ is called {\em $l$-part homogeneous} if $R(i_1,\ldots,i_d)$ depends only on $\lceil i_1/l \rceil,\ldots,\lceil i_d/l \rceil$ and on the order of $i_1,\ldots,i_d$ (i.e.\ whether $i_j\leq i_{j'}$ and/or $i_j\geq i_{j'}$ for every $1\leq j<j'\leq d$).
\end{defi}

The reason for requiring that $s\geq d$ is so that all orders will be ``expressed'' in every interval. For example, even if $i_1,\ldots,i_d$ all satisfy $\lceil i_j/s\rceil=1$, requiring that $s\geq d$ makes it still possible to have any order between them (and in particular they can be all unequal). This provides soundness to the following definition of compatibility, which means that two homogeneous functions with different domains (both being discrete, or one of them being continuous) have the same ``big picture''.

\begin{defi}\label{def:compat}
Two $l$-part homogeneous functions $R:[sl]^d\to [k]$ and $S:[tl]^d\to [k]$ are called {\em compatible} if for every $(i_1,\ldots,i_d)\in [sl]^d$ and $(i'_1,\ldots,i'_d)\in [tl]^d$, that satisfy $\lceil i_j/s \rceil=\lceil i'_j/t\rceil$ for all $j\in [d]$ as well as that $i_j\leq i_{j'}$ if and only if $i'_j\leq i'_{j'}$ for every $j\neq j'$, the equality $R(i_1,\ldots,i_d)=S(i'_1,\ldots,i'_d)$ holds.

An $l$-part homogeneous function $R:[sl]^d\to [k]$ and an $l$-part homogeneous function $F:\mathbb{I}^d\to [k]$ are called {\em compatible} if for every $(i_1,\ldots,i_d)\in [sl]^d$ and $(x_1,\ldots,x_d)\in \mathbb{I}^d$, that satisfy $\lceil i_j/s \rceil=\lceil lx_j\rceil$ for all $j\in [d]$, as well as that $i_j\leq i_{j'}$ if and only if $x_j\leq x_{j'}$ for every $j\neq j'$, the equality $R(i_1,\ldots,i_d)=F(x_1,\ldots,x_d)$ holds.
\end{defi}

The following observation in particular would not have been true without the requirement that $s\geq d$ in Definition \ref{def:intphomo}.

\begin{obs}\label{obs:onecomp}
If $R:[sl]^d\to [k]$ is $l$-part homogeneous (and $s\geq d$), then for every $t\geq d$ there is exactly one $l$-part homogeneous function $S:[tl]^d\to [k]$ that is compatible with $R$. Additionally, there is exactly one $l$-part homogeneous $F:\mathbb{I}^d\to [k]$ that is compatible with $R$, and for any $l$-part homogeneous $F:\mathbb{I}^d\to [k]$ there is exactly one $l$-part homogeneous $S:[tl]^d\to [k]$ that is compatible with $F$.
\end{obs}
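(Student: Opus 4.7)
The plan is to observe that the very definition of $l$-part homogeneity says exactly that such a function is determined by a combinatorial table indexed by pairs consisting of a block vector $(a_1,\ldots,a_d) \in [l]^d$ together with a ternary relation on $[d]$ recording for each ordered pair $(j,j')$ whether the two coordinates compare as less, equal, or greater, subject to the consistency constraint that $a_j < a_{j'}$ forces strict inequality. Compatibility of two $l$-part homogeneous functions (over any of the admissible domains) is exactly the statement that the two induced tables coincide. Hence the whole observation reduces to a realizability claim: that every consistent (block vector, order relation) pair on $[d]$ is attained by some tuple in each admissible domain.

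First I would formalize type extraction: to each $(i_1,\ldots,i_d) \in [sl]^d$ (respectively $(x_1,\ldots,x_d) \in \mathbb{I}^d$) associate the pair (block vector, order relation) with $a_j = \lceil i_j/s \rceil$ (respectively $a_j = \lceil l x_j \rceil$) and the ternary relation read off from the comparisons between the coordinates themselves. By Definition \ref{def:intphomo} (respectively Definition \ref{def:phomo}), an $l$-part homogeneous function is exactly one whose value depends only on this type. The core step is then to show that \emph{every} consistent type is realized in each admissible domain. In the continuous case this is immediate: within any subinterval $\mathbb{I}_{b,l}$ one can freely choose real values realizing any prescribed partial order among the at most $d$ coordinates assigned to block $b$. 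In the discrete case $s \ge d$ (respectively $t \ge d$) is precisely what guarantees enough room, since within a single block $\{(b-1)s+1,\ldots,bs\}$ there are at least $d$ integer slots, sufficient to place any relative order (strict or with ties) of the coordinates assigned to that block.

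With realizability established, every assertion in the observation follows by the obvious recipe. To build the unique compatible $S:[tl]^d \to [k]$ from $R$, take any $(i_1',\ldots,i_d') \in [tl]^d$, compute its type, pick by realizability some tuple in $[sl]^d$ of the same type, and set $S(i_1',\ldots,i_d')$ to equal $R$ there; the $l$-part homogeneity of $R$ makes the value independent of the chosen representative, and $S$ is $l$-part homogeneous by construction. The compatible $F:\mathbb{I}^d \to [k]$ is built from $R$ in exactly the same way, as is the compatible $S$ from a given $F$. Uniqueness in each case is automatic: any compatible function must agree with $R$ (or $F$) on at least one tuple of every realizable type, and $l$-part homogeneity then propagates this agreement to the whole domain. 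I expect no substantial obstacle beyond the realizability bookkeeping, which is exactly where the hypotheses $s \ge d$ and $t \ge d$ are used.
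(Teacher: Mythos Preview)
Your proposal is correct and follows essentially the same approach as the paper's proof sketch: reduce everything to the realizability of each consistent (block vector, order relation) pair in the relevant domain, which is exactly where $s\geq d$ and $t\geq d$ enter. The only cosmetic difference is that the paper writes down an explicit realizing tuple (via $b_j=|\{j'\in [d]:b'_{j'}<b'_j\}|+1$) rather than arguing realizability abstractly as you do, but the content is the same.
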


\begin{proof}[Proof sketch]
As an example, here is how the first statement is proved. Given $R:[sl]^d\to [k]$ that is $l$-part homogeneous and $S:[tl]^d\to [k]$ that is compatible with $R$, consider $(i'_1,\ldots,i'_d)\in [tl]^d$. Since $s\geq d$, it is possible to find $(i_1,\ldots,i_d)\in [sl]^d$ that satisfy $\lceil i_j/s \rceil=\lceil i'_j/t\rceil$ for all $j\in [d]$ as well as that $i_j\leq i_{j'}$ if and only if $i'_j\leq i'_{j'}$ for every $j\neq j'$.

To find $(i_1,\ldots,i_d)$, we set $a_j=\lceil i'_j/t\rceil$ and $b'_j=i'_j-(a_j-1)t$ for $j\in [d]$. We now set $i_j=(a_j-1)s+b_j$, where $b_j=|\{j'\in [d]:b'_{j'}<b'_j\}|+1$. This ensures that $b_1,\ldots,b_d$ have the same order relations between them as $b'_1,\ldots,b'_d$ do. Now $\lceil i_j/s\rceil=\lceil i'_j/t\rceil$ for all $j\in [d]$ since $b_j\in [d]$ and $t\geq d$. Finally this implies that $i_1,\ldots,i_d$ and $i'_1,\ldots,i'_d$ have the same order between them by the above guarantee for $b_1,\ldots,b_d$ and $b'_1,\ldots,b'_d$.

Thus $S(i'_1,\ldots,i'_d)=R(i_1,\ldots,i_d)$ by Definition \ref{def:compat}, and in particular there is only one possible value for $S$ in this location. Since we had no prior restrictions on $(i'_1,\ldots,i'_d)$ apart from belonging to $[tl]^d$, this determines the entirety of $S$.
\end{proof}

A crucial part of the proof of the main result requires proving the existence of substructures that also respect certain ``boundaries'', as per the following definition.

\begin{defi}\label{def:inlay}
For a function $R:[sl]^d\to [k]$ and a function $F:\mathbb{I}^d\to [k]$, we say that $R$ is an {\em $s$ over $[l]^d$ inlay of $F$} if for every $a\in [l]$ there exist $0< x_{a,1}<\cdots<x_{a,s}\leq 1$, so that for every $(i_1,\ldots,i_d)\in [sl]^d$, denoting $a_j=\lceil i_j/s \rceil$ and $b_j=i_j-(a_j-1)s$, we have $R(i_1,\ldots,i_d)=F((a_1-1+x_{a_1,b_1})/l,\ldots,(a_d-1+x_{a_d,b_d})/l)$.
\end{defi}

In other words, an $s$ over $[l]^d$ inlay of $F$ is its ``restriction to a subgrid'' that results from considering the partition $\mathbb{I}_{1,l},\ldots,\mathbb{I}_{l,l}$ of $\mathbb{I}$, and selecting a set of size $s$ from every interval.

We also define inlays of discrete functions $S:[tl]^d\to [k]$ for $t\geq s$, this time by considering the partitioning of $[tl]$ to $l$ ``intervals'' of size $[t]$.

\begin{defi}\label{def:intinlay}
For a function $R:[sl]^d\to [k]$ and a function $S:[tl]^d\to [k]$, where $s\leq t$, we say that $R$ is an {\em $s$ from $t$ over $[l]^d$ inlay of $S$} if for every $a\in [l]$ there exist $1\leq h_{a,1}<\cdots<h_{a,s}\leq t$, so that for every $(i_1,\ldots,i_d)\in [sl]^d$, denoting $a_j=\lceil i_j/t \rceil$ and $b_j=i_j-(a_j-1)t$, we have $R(i_1,\ldots,i_d)=S((a_1-1)t+h_{a_1,b_1},\ldots,(a_d-1)t+h_{a_d,b_d})$.
\end{defi}

Note the following simple observation.

\begin{obs}
If a function $F:\mathbb{I}^d\to [k]$ is $l$-part homogeneous and $s\geq d$, then every $s$ over $[l]^d$ inlay of $F$ is $l$-part homogeneous and compatible with $F$.

If a function $S:[tl]^d\to [k]$ is $l$-part homogeneous and $t\geq s\geq d$, then every $s$ from $t$ over $[l]^d$ inlay of $S$ is $l$-part homogeneous and compatible with $S$. \qed
\end{obs}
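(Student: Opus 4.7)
The plan is to unpack the definition of an inlay and verify both conclusions by a direct definition chase, treating the continuous and discrete statements in parallel since the arguments are structurally identical. In the continuous case, I fix witness sequences $0 < x_{a,1} < \cdots < x_{a,s} \leq 1$ for each $a\in [l]$ that realize $R$ as an $s$ over $[l]^d$ inlay of $F$. Given $(i_1,\ldots,i_d)\in [sl]^d$, I set $a_j = \lceil i_j/s\rceil$, $b_j = i_j-(a_j-1)s$, and $y_j = (a_j-1+x_{a_j,b_j})/l$, so that by Definition \ref{def:inlay} we have $R(i_1,\ldots,i_d) = F(y_1,\ldots,y_d)$.

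The core of the argument is to extract two elementary facts about this decomposition. First, $\lceil ly_j\rceil = a_j = \lceil i_j/s\rceil$: this is immediate from $0 < x_{a_j,b_j}\leq 1$, which forces $a_j-1 < ly_j \leq a_j$. Second, the tuples $(i_1,\ldots,i_d)$ and $(y_1,\ldots,y_d)$ share identical order relations. When $a_j \neq a_{j'}$, both tuples inherit their order from the $a$'s; when $a_j = a_{j'}$, the order between $i_j$ and $i_{j'}$ reduces to the order between $b_j$ and $b_{j'}$, while the order between $y_j$ and $y_{j'}$ reduces to the order between $x_{a_j,b_j}$ and $x_{a_j,b_{j'}}$, and these agree because $x_{a_j,1} < \cdots < x_{a_j,s}$ is strictly increasing. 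Combining these two facts with the $l$-part homogeneity of $F$ shows that $R(i_1,\ldots,i_d)$ is determined by the tuple $(\lceil i_j/s\rceil)_{j\in [d]}$ together with the order of the $i_j$'s, which is exactly Definition \ref{def:intphomo}. Compatibility with $F$ follows by the same token: any $(y'_1,\ldots,y'_d)\in\mathbb{I}^d$ meeting the hypotheses of Definition \ref{def:compat} with $(i_1,\ldots,i_d)$ shares both $\lceil ly'_j\rceil = a_j$ and the order of $(y_1,\ldots,y_d)$, so $F(y'_1,\ldots,y'_d) = F(y_1,\ldots,y_d) = R(i_1,\ldots,i_d)$ by the $l$-part homogeneity of $F$.

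The discrete statement follows by the very same scheme: substitute $y_j$ with $h'_j = (a_j-1)t + h_{a_j,b_j}$ (where now $a_j = \lceil i_j/t\rceil$), verify $\lceil h'_j/t\rceil = a_j$ using $1 \leq h_{a_j,b_j}\leq t$, and observe that the order-matching case analysis transfers verbatim because $h_{a,1}<\cdots<h_{a,s}$ is strictly increasing; the $l$-part homogeneity of $S$ then closes both parts. There is no real obstacle in this proof, only a bookkeeping point that deserves care: in the $a_j = a_{j'}$ sub-case one must explicitly appeal to the strict monotonicity of the witness sequences, rather than to any property of $F$ or $S$. The hypothesis $s\geq d$ plays no further role here beyond being absorbed into the very definition of $l$-part homogeneous discrete functions that we are invoking.
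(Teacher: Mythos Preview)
The paper offers no proof of this observation (it is simply marked \qed), and your definition-chase is exactly the routine verification one would supply; it is correct. One caveat worth flagging: Definition~\ref{def:intinlay} in the paper contains an evident typo, writing $a_j=\lceil i_j/t\rceil$ and $b_j=i_j-(a_j-1)t$ where $a_j=\lceil i_j/s\rceil$ and $b_j=i_j-(a_j-1)s$ are clearly intended (otherwise $b_j$ need not lie in $[s]$ and $h_{a_j,b_j}$ is undefined); your discrete sketch inherits this typo, but the argument goes through verbatim under the intended reading.
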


\section{A Ramsey-type lemma}\label{sec:ramsey}

To prove the existence of homogeneous discrete functions, that appear (as inlays) with positive probability in $F:\mathbb{I}^d\to [k]$, we need a form of Ramsey's theorem. The following is Ramsey's theorem for edge-colored (but otherwise simple and non-oriented) hypergraphs, which is well known.

\begin{lemma}[Ramsey's theorem for edge-colored hypergraphs, see \cite{katzramsey}]\label{lem:ramhgraph}
For a set $V$ let $V^{=d}$ denote the set of all subsets of size $d$ of $V$, and let $f:V^{=d}\to A$ be any function whose range is a finite set $A$. There exists a global function $\mathcal{R}_1:\mathbb{N}^3\to\mathbb{N}$ so that if $|V|\geq \mathcal{R}_1(d,|A|,s)$, then there exists $U\subset V$ with $|U|=s$ for which the restriction $f|_{U^{=d}}$ is the constant function with the value $a$ for some $a\in A$.
\end{lemma}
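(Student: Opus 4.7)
The plan is to prove this by induction on the arity $d$, following the classical Erd\H{o}s--Rado argument. For the base case $d = 1$, the function $f$ simply colors the vertices of $V$ by $|A|$ colors, so by pigeonhole if $|V| \geq |A|(s-1) + 1$ some color class has at least $s$ elements, giving the required monochromatic $U$. Thus one may take $\mathcal{R}_1(1, |A|, s) = |A|(s-1) + 1$.

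For the inductive step, assume $\mathcal{R}_1(d-1, |A|, \cdot)$ has been defined, and set $N = |A|(s-1) + 1$. I would construct greedily a sequence $v_1, \ldots, v_N \in V$ of vertices, a sequence $c_1, \ldots, c_N \in A$ of colors, and a nested chain $V = V_0 \supseteq V_1 \supseteq \cdots \supseteq V_N$ with $v_i \in V_{i-1}$, so that for every $i \in [N]$ and every $(d-1)$-subset $T \subseteq V_i$ one has $f(T \cup \{v_i\}) = c_i$. At step $i$, I pick an arbitrary $v_i \in V_{i-1}$, define the induced $(d-1)$-coloring $g_i(T) = f(T \cup \{v_i\})$ on $(V_{i-1} \setminus \{v_i\})^{=d-1}$, and invoke the inductive hypothesis to obtain a $g_i$-monochromatic subset $V_i$ of the required size, with constant value $c_i$. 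The target sizes are chosen in reverse: set $n_N = 1$ and $n_{i-1} = \mathcal{R}_1(d-1, |A|, n_i) + 1$, so that declaring $\mathcal{R}_1(d, |A|, s) := n_0$ makes the whole construction go through whenever $|V| \geq n_0$.

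Once the $v_i$ and $c_i$ are in hand, a pigeonhole on $c_1, \ldots, c_N$ produces indices $i_1 < \cdots < i_s$ with a common color $a \in A$. The claim is that $U = \{v_{i_1}, \ldots, v_{i_s}\}$ is $f$-monochromatic with value $a$: for any $d$-subset $\{v_{i_{j_1}}, \ldots, v_{i_{j_d}}\} \subseteq U$ with $j_1 < \cdots < j_d$, the nesting $V_{i_{j_1}} \supseteq V_{i_{j_1}+1} \supseteq \cdots$ together with $v_m \in V_{m-1}$ forces $v_{i_{j_2}}, \ldots, v_{i_{j_d}} \in V_{i_{j_1}}$, and then the defining property of the construction at step $i_{j_1}$ gives $f(\{v_{i_{j_1}}, \ldots, v_{i_{j_d}}\}) = c_{i_{j_1}} = a$, as desired. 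The only subtle point is the backward recursion for $\mathcal{R}_1$; there is no real combinatorial obstacle, the proof is entirely standard.
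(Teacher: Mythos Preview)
Your argument is the standard Erd\H{o}s--Rado step-down induction and is correct; the backward recursion for the sizes $n_i$ is set up properly (one could start at $n_{N-1}=1$ instead of $n_N=1$, but this is cosmetic). Note, however, that the paper does not supply its own proof of this lemma: it is quoted as a known result with a reference, so there is nothing to compare against beyond observing that your proof is exactly the classical one that the cited reference would give.
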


It is not hard to generalize this to the setting where the domain of the function $f$ is over all nonempty subsets of $V$ of size {\em at most} $d$.

\begin{lemma}\label{lem:ramsgraph}
For a set $V$ let $V^{\leq d}$ denote the set of all nonempty subsets of size at most $d$ of $V$, and let $f:V^{\leq d}\to A$ be any function whose range is a finite set $A$. There exists a global function $\mathcal{R}_2:\mathbb{N}^3\to\mathbb{N}$ so that if $|V|\geq \mathcal{R}_2(d,|A|,s)$, then there exists $U\subset V$ with $|U|=s$ for which the restriction $f|_{U^{\leq d}}$ is a function that depends only on the size of the set. That is, there exists $f':[d]\to A$ so that $f(C)=f'(|C|)$ for all $C\in U^{\leq d}$.
\end{lemma}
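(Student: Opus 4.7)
The plan is to derive Lemma \ref{lem:ramsgraph} from Lemma \ref{lem:ramhgraph} by an iterated application, peeling off the colorings of subsets of each fixed size one at a time.

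First I would define the required size by a nested recursion. Set $s_0 = s$ and, for $i = 1, 2, \ldots, d$, set $s_i = \mathcal{R}_1(i, |A|, s_{i-1})$. I would then define $\mathcal{R}_2(d,|A|,s) = s_d$. Now given $V$ with $|V| \geq s_d$, I would apply Lemma \ref{lem:ramhgraph} to the restriction $f|_{V^{=d}}$ (which is a function $V^{=d} \to A$), obtaining a subset $V_{d-1} \subseteq V$ of size $s_{d-1}$ on which $f$ is constant on size-$d$ subsets. Next, I would apply Lemma \ref{lem:ramhgraph} to the restriction $f|_{V_{d-1}^{=d-1}}$, obtaining $V_{d-2} \subseteq V_{d-1}$ of size $s_{d-2}$ on which $f$ is constant on size-$(d-1)$ subsets. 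Iterating this process all the way down to size $1$, I obtain a nested sequence $V \supseteq V_{d-1} \supseteq \cdots \supseteq V_0$ with $|V_i|=s_i$.

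Finally, I would set $U = V_0$, which has the required size $|U|=s_0=s$. For each $i \in [d]$, since $U \subseteq V_{i-1}$ and the color of $f$ on size-$i$ subsets of $V_{i-1}$ is a single value $a_i \in A$, the restriction $f|_{U^{=i}}$ is the constant function with value $a_i$. Defining $f' : [d] \to A$ by $f'(i) = a_i$ then yields $f(C) = f'(|C|)$ for every nonempty $C \in U^{\leq d}$, as required.

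There is no real obstacle here; the only thing to verify is that the iterative shrinking is set up so the size requirements chain correctly. The slightly delicate point is that each application of Lemma \ref{lem:ramhgraph} must be done on the set that was output by the previous step (not on the original $V$), so that the monochromaticity on size-$i$ subsets is preserved when we further restrict: this preservation is automatic, because a constant restriction of $f$ on $V_{i-1}^{=i}$ remains constant on $W^{=i}$ for any $W \subseteq V_{i-1}$, and in particular on $U^{=i}$.
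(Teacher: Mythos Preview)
Your proof is correct and follows essentially the same approach as the paper: iterate Lemma \ref{lem:ramhgraph} once for each subset size, shrinking the set each time. The only cosmetic difference is that you peel off the sizes from $d$ down to $1$, whereas the paper goes from $1$ up to $d$; either order works, since monochromaticity on $i$-subsets is preserved under passing to subsets.
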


\begin{proof}
We set $\mathcal{R}_2(d,k,s)=\mathcal{R}_1(1,k,\mathcal{R}_1(2,k,\cdots \mathcal{R}_1(d,k,s)\cdots))$. Given $V$, we employ Lemma \ref{lem:ramhgraph} over $V$ with parameters $1$, $|A|$ and $\mathcal{R}_1(2,k,\cdots \mathcal{R}_1(d,k,s)\cdots)$ to obtain $U_1$ (for the function $f|_{V^{=1}}$), then employ Lemma \ref{lem:ramhgraph} with parameters $2$, $|A|$ and $\mathcal{R}_1(3,k,\cdots \mathcal{R}_1(d,k,s)\cdots)$ over $U_1$ (for $f|_{U_1^{=2}}$) to obtain $U_2$, and so on, until we employ Lemma \ref{lem:ramhgraph} over a set $U_{d-1}$ with parameters $d$, $|A|$ and $s$ (for $f|_{U_{d-1}^{=d}}$) to finally obtain our required set $U=U_d$.
\end{proof}

We will need an even more general version, that holds when there are several ``types'' of vertices, and we need to choose a given number of vertices of every type.

\begin{lemma}\label{lem:rampgraph}
For a set $V$ that is a disjoint union of $l$ sets $V_1,\ldots,V_l$, let $V^{\leq d}$ denote the set of all nonempty subsets of size at most $d$ of $V$, and let $f:V^{\leq d}\to A$ be any function whose range is a finite set $A$. There exists a global function $\mathcal{R}:\mathbb{N}^4\to\mathbb{N}$ so that if $|V_i|\geq \mathcal{R}(l,d,|A|,s)$ for all $i\in [l]$, then there exist $U_1\subset V_1,\ldots,U_l\subset V_l$ with $|U_i|=s$ for $i\in [l]$, for which the restriction $f|_{(U_1\cup\cdots\cup U_l)^{\leq d}}$ is a function that depends only on the sizes of the intersections with $U_1,\ldots,U_l$. That is, there exists $f':P(d,l)\setminus\{(0,\ldots,0)\}\to A$ so that $f(C)=f'(|C\cap U_1|,\ldots,|C\cap U_l|)$ for all $C\in (U_1\cup\cdots\cup U_l)^{\leq d}$, where $P(d,l)$ is the set of all non-negative integer sequences $(d_1,\ldots,d_l)$ that sum up to at most $d$.
\end{lemma}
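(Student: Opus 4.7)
My approach mirrors the proof of Lemma \ref{lem:ramsgraph}, but with an additional layer of iteration to handle the multiple sorts. I would iterate a single-profile monochromatization step once for each element of the finite index set $P(d,l) \setminus \{(0,\ldots,0)\}$. Enumerating these profiles as $\mathbf{d}^{(1)}, \ldots, \mathbf{d}^{(N)}$, I would construct nested chains $V_i = V_i^{(0)} \supseteq V_i^{(1)} \supseteq \cdots \supseteq V_i^{(N)} = U_i$ so that after step $t$ the restriction of $f$ to subsets of $V_1^{(t)} \cup \cdots \cup V_l^{(t)}$ with profile $\mathbf{d}^{(t)}$ is constant. Since further shrinking preserves the constancy achieved in earlier steps, after the last step all profiles are simultaneously homogenized. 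The bound $\mathcal{R}(l, d, |A|, s)$ is then defined as the corresponding nested composition of the single-profile Ramsey bounds, analogous to how $\mathcal{R}_2$ was built from $\mathcal{R}_1$.

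The main new content is the following single-profile claim: for any fixed profile $(d_1, \ldots, d_l)$ with $\sum d_i \leq d$ and any coloring $g: \binom{V_1}{d_1} \times \cdots \times \binom{V_l}{d_l} \to A$, provided each $|V_i|$ is large enough, one can find $U_i \subseteq V_i$ of size $s$ on which $g$ is constant on the whole box $\binom{U_1}{d_1} \times \cdots \times \binom{U_l}{d_l}$. I would prove this by induction on $l$. The base case $l = 1$ is exactly Lemma \ref{lem:ramhgraph} applied with parameter $d_1$. For the inductive step, set $w := \mathcal{R}_1(d_l, |A|, s)$ and choose any $W_l \subseteq V_l$ with $|W_l| = w$. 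Define an auxiliary coloring $h$ on $\binom{V_1}{d_1} \times \cdots \times \binom{V_{l-1}}{d_{l-1}}$ whose value at $(S_1, \ldots, S_{l-1})$ is the entire function $T \mapsto g(S_1, \ldots, S_{l-1}, T)$, which lives in the finite set $A^{\binom{W_l}{d_l}}$. Applying the inductive hypothesis to $h$ with this enlarged color set yields $U_1, \ldots, U_{l-1}$ of size $s$ on which $h$ is constant, that is, on which $g(S_1, \ldots, S_{l-1}, T)$ depends only on $T$. The residual coloring $\binom{W_l}{d_l} \to A$ is then homogenized by a direct application of Lemma \ref{lem:ramhgraph}, which by our choice of $w$ extracts the desired $U_l \subseteq W_l$ of size $s$.

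The main subtlety lies in bookkeeping the color-set growth inside the single-profile induction: the auxiliary coloring takes values in the huge set $A^{\binom{W_l}{d_l}}$, whose size grows with $w = \mathcal{R}_1(d_l, |A|, s)$, yet $w$ depends only on $|A|$ and $s$ and not on the inductive quantities $|V_1|, \ldots, |V_{l-1}|$. This circularity-free dependency lets $\mathcal{R}(l, d, |A|, s)$ be defined by explicit recursion, at the price of a large (tower-type) growth rate that is immaterial for the rest of the paper.
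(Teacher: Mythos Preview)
Your proof is correct but follows a genuinely different decomposition from the paper's. The paper proceeds by a single induction on $l$: it fixes a small subset $W\subseteq V_l$, packs all interactions of sets $C\subseteq V_1\cup\cdots\cup V_{l-1}$ with $W$ into an enlarged coloring $f'$ on $(V_1\cup\cdots\cup V_{l-1})^{\leq d}$, applies the inductive hypothesis to obtain $U_1,\ldots,U_{l-1}$, and then applies Lemma~\ref{lem:ramsgraph} once more to $W$ to extract $U_l$. Thus the paper handles \emph{all} profiles simultaneously at each inductive level, using Lemma~\ref{lem:ramsgraph} as the atomic step. You instead separate the two axes: an outer loop over the finitely many profiles in $P(d,l)\setminus\{(0,\ldots,0)\}$ (directly mirroring how Lemma~\ref{lem:ramsgraph} iterates over cardinalities), and inside each step an induction on $l$ that yields the single-profile product Ramsey statement, with Lemma~\ref{lem:ramhgraph} as the atomic step. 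Your route is arguably more modular, since the single-profile claim is a clean standalone fact and the outer iteration is transparent; the paper's route is more compact, collapsing everything into one induction on $l$. Both produce tower-type bounds in which the color-set blow-up is controlled exactly as you note (the auxiliary set $W_l$ has size depending only on $|A|$, $d_l$, and the target $s$, not on the earlier $V_i$), and either argument suffices for the downstream use in Lemma~\ref{lem:raminlay}.
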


\begin{proof}
The proof is by induction over $l$, where clearly we can set $\mathcal{R}(1,d,k,s)=\mathcal{R}_2(d,k,s)$. To set $\mathcal{R}(l,d,k,s)$, we consider a set $V$ that is the disjoint union of $V_1,\ldots,V_l$ and a function $f:V^{\leq d}\to A$ with $|A|=k$. We will only use a subset $W$ of $V_l$ of size $k'=\mathcal{R}_2(d,k^{(d+1)^l},s)$ that we choose arbitrarily (the eventual value for $\mathcal{R}(l,d,k,s)$ will be much larger than $k'$).

Next we define a function $f':(V\setminus V_l)^{\leq d}\to A'$ for a corresponding (rather large) range $A'$ that will become clear from the following definitions. For every $C\in(V\setminus V_l)^{=d}$ we will just set $f'(C)=f(C)$. For every $C\in(V\setminus V_l)^{\leq d-1}$, the value $f'(C)$ will be a member of $A\times A^{W^{\leq d-|C|}}$. Specifically, we define $h_C:W^{\leq d-|C|}\to A$ by $h_C(D)=f(C\cup D)$ for every $D\in W^{\leq d-|C|}$, and then define $f'(C)=(f(C),h_C)$.

We now set by induction $\mathcal{R}(l,d,k,s)=\mathcal{R}(l-1,d,k^{k'^d},s)$, and use the induction hypothesis to obtain $U_1\subset V_1,\ldots,U_{l-1}\subset V_{l-1}$, all of size $s$, so that $f'(C)$ for any $C\in (U_1\cup\cdots\cup U_{l-1})^{\leq d}$ depends only on the intersection sizes $|C\cap U_1|,\ldots,|C\cap U_{l-1}|$. Note that this in particular implies the same for $f(C)$, because this value was used for one of the ``coordinates'' of $f'(C)$.

We now work on obtaining $U_l\subset W$. We define $f'':W^{\leq d}\to A''$ by the following. For $C\in W^{=d}$ we just set $f''(C)=f(C)$. For $C\in W^{\leq d-1}$, the value $f''(C)$ will be a member of $A^{P(d-|C|,l-1)}$. We first define $h'_C:P(d-|C|,l-1)\to A$ as follows. By the choice of $U_1,\ldots,U_{l-1}$ with respect to $f'$, for every $(d_1,\ldots,d_{l-1})\in P(d-|C|,l-1)\setminus\{(0,\ldots,0)\}$, if $D,D'\subset U_1\cup\cdots\cup U_{l-1}$ satisfy $|D\cup U_i|=|D'\cup U_i|=d_i$ for all $i\in [l-1]$ then $f(C\cup D)=f(C\cup D')$. We set $h'_C(d_1,\ldots,d_{l-1})$ to this common value of $f$. Additionally (and naturally) we set $h'_C(0,\ldots,0)=f(C)$. Having thus fully defined $h'_C$, we then set $f''(C)=h'_C$.

We now employ Lemma \ref{lem:ramsgraph} to obtain $U_l\subset W\subset V_l$ of size $s$, so that $f''(C)$ depends only on $|C|$ for every $C\in W^{\leq d}$. By this guarantee, along with what we already know about $f(C)$ for $C\in (U_1\cup\cdots\cup U_{l-1})^{\leq d}$, we obtain that $U_1,\ldots,U_l$ are the required sets for the assertion of the lemma.
\end{proof}

We will use a form of Ramsey's theorem that follows from Lemma \ref{lem:rampgraph}, stated in terms of homogeneous inlays of discrete functions.

\begin{lemma}\label{lem:raminlay}
For every $l$, $s\geq d$ and $k$ there exists $r(l,s,d,k)$, so that if $t\geq r$, then every function $S:[tl]^d\to [k]$ contains an $s$ from $t$ over $[l]^d$ inlay that is $l$-part homogeneous.
\end{lemma}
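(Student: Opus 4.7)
The plan is to derive Lemma \ref{lem:raminlay} from Lemma \ref{lem:rampgraph}, applied to the ground set $V=[tl]$ equipped with its natural partition $V = V_1 \cup \cdots \cup V_l$ into blocks $V_a = \{(a-1)t+1, \ldots, at\}$. The resulting sets $U_1 \subseteq V_1, \ldots, U_l \subseteq V_l$ of size $s$ will enumerate as $h_{a,1} < \cdots < h_{a,s}$ and directly provide the inlay.

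To encode $S$ as a coloring $f$ on $V^{\leq d}$, I first assign to each $C \in V^{\leq d}$ a canonical label map $\lambda_C$ that sends the $r$-th smallest element of $C \cap V_a$ to $(a,r) \in [l] \times [d]$. Then I define $f(C)$ to be the tabulation, over all ``type maps'' $\tilde\phi: [d] \to [l] \times [d]$, of the value $S(\lambda_C^{-1}(\tilde\phi(1)), \ldots, \lambda_C^{-1}(\tilde\phi(d)))$ whenever the image of $\tilde\phi$ lies in $\lambda_C(C)$, and of a dummy symbol otherwise. The range of $f$ is a finite set of size at most $(k+1)^{(ld)^d}$, so Lemma \ref{lem:rampgraph} yields the required $U_1, \ldots, U_l$ of size $s$ provided $t \geq r(l,s,d,k) := \mathcal{R}(l, d, (k+1)^{(ld)^d}, s)$, with the guarantee that $f(C)$ depends only on the intersection profile $(|C \cap U_1|, \ldots, |C \cap U_l|)$.

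The remaining step is to verify that the resulting inlay is $l$-part homogeneous. Given $(i_1, \ldots, i_d) \in [sl]^d$, the inlay value equals $S$ on the tuple with entries $(a_j-1)t + h_{a_j, b_j}$, where $a_j = \lceil i_j/s \rceil$ and $b_j = i_j - (a_j-1)s$. One checks that the underlying set $C$ of this tuple satisfies $|C \cap U_a| = |\{b_{j'} : a_{j'} = a\}|$, a quantity determined by the block assignments $(a_1, \ldots, a_d)$ and the order pattern of $(i_1, \ldots, i_d)$ alone. Likewise, the $\lambda_C$-label of the $j$-th coordinate is $(a_j, r_j)$, where $r_j$ is the rank of $b_j$ within $\{b_{j'} : a_{j'} = a_j\}$---again determined by the same data. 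Hence any two tuples sharing a block assignment and order pattern produce sets $C, C'$ with identical intersection profiles and identical type maps $\tilde\phi$, so by the Ramsey guarantee $f(C) = f(C')$ and the two $S$-values agree, matching Definition \ref{def:intphomo}.

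The main obstacle is designing $f$ so that its Ramsey-enforced invariance (dependence only on intersection sizes) aligns exactly with the data that an $l$-part homogeneous inlay is allowed to depend on. The key observation is that inside the chosen subsets $U_a$, the rank of a coordinate's offset $b_j$ among the $b$-values appearing in the same block coincides with its $\lambda_C$-rank within $C \cap U_{a_j}$, so the abstract $(a,r)$-label system faithfully captures block-and-order information and nothing more; the requirement $s \geq d$ ensures enough room for all order patterns to be realized.
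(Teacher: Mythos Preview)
Your proof is correct and follows essentially the same route as the paper: partition $[tl]$ into the $l$ blocks $V_a$, encode $S$ as a coloring $f$ on $V^{\leq d}$ rich enough to recover all values $S(\cdot)$ on tuples supported in a given set, apply Lemma~\ref{lem:rampgraph}, and read off the inlay from the resulting $U_1,\ldots,U_l$. The only difference is cosmetic, in how the coloring is packaged: the paper sorts $C$ globally as $i_1<\cdots<i_{d'}$ and lets $f(C)$ be the function $h\mapsto S(i_{h(1)},\ldots,i_{h(d)})$ over onto maps $h:[d]\to[d']$, whereas you label elements by their block and within-block rank and tabulate over all maps $[d]\to[l]\times[d]$. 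Your encoding is a bit more redundant (the block index is already determined by the intersection profile that Ramsey fixes), which inflates the color set and hence the bound $r(l,s,d,k)$, but this is immaterial for the argument.
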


\begin{proof}
This will be by a direct application of Lemma \ref{lem:rampgraph}. Given $l$, $s$, $d$ and $k$, we first define for every $c\in [d]$ the set $B_c$ of functions from $[d]$ onto $[c]$. We then define $A=\bigcup_{c=1}^d A_c$, where $A_c$ is the set of functions from $B_c$ to $[k]$. We finally set $r(l,s,d,k)=\mathcal{R}(l,d,|A|,s)$ where $\mathcal{R}$ is the function of Lemma \ref{lem:rampgraph}.

Given a function $S:[tl]^d\to [k]$ with $t\geq r(l,s,d,k)$, we define $V_i=\{(i-1)t+1,\ldots,it\}$ for $i\in [l]$, and define $f:(\bigcup_{i=1}^lV_i)^{\leq d}\to A$ as follows. Given $C\in (\bigcup_{i=1}^lV_i)^{\leq d}$, we sort the set to obtain $C=\{i_1,\ldots,i_{d'}\}$ with $i_1<\cdots<i_{d'}$ and $d'=|C|\leq d$. We then define $f(C)=g_C$, where $g_C:B_{d'}\to [k]$ is a member of $A_{d'}$. To define $g_C(h)\in [k]$ for an onto function $h:[d]\to [d']$, we set it to the value $S(i_{h(1)},\ldots,i_{h(d)})$.

We now use Lemma \ref{lem:rampgraph} to find sets $U_1\subset V_1,\ldots,U_l\subset V_l$, all of size $s$, so that for any $C\in (U_1\cup\cdots\cup U_l)^{\leq d}$ the value $f(C)$ (which is in fact a function from $B_{|C|}$ to $[k]$) depends only on the intersection sizes $|C\cap U_1|,\ldots,|C\cap U_l|$. To conclude, for every $i\in [l]$ we sort $U_i$ to obtain $(i-1)t+h_{i,1}<\cdots<(i-1)t+h_{i,s}$ (noting that $U_i\subset \{(i-1)t+1,\ldots,it\}$ we obtain $h_{i,j}\in [t]$ for every $i\in [l]$ and $j\in [s]$). Considering the function $R:[sl]^d\to [k]$ which is the $s$ from $t$ over $[l]^d$ inlay of $S$ corresponding (as per Definition \ref{def:intinlay}) to the resulting $h_{i,j}$, we obtain an $l$-part homogeneous function.

To prove that $R$ it is indeed $l$-part homogeneous, consider $(i_1,\ldots,i_d)\in [sl]^d$ and $(i'_1,\ldots,i'_d)\in [sl]^d$ that satisfy $\lceil i_j/s\rceil=\lceil i'_j/s\rceil$ and $i_j\leq i_{j'}$ if and only if $i'_j\leq i'_{j'}$ for every $j,j'\in [d]$. First consider the result of sorting the sets to obtain $c_1<\cdots<c_{d'}$ with $|\{c_1,\ldots,c_{d'}\}|=|\{i_1,\ldots,i_d\}|$ and $c'_1<\cdots<c'_{d'}$ with $|\{c'_1,\ldots,c'_{d'}\}|=|\{i'_1,\ldots,i'_d\}|$, noting that $d'$ is the same for both $c_j$ and $c'_j$ due to the order condition. Due to the same order condition we also have a single onto function $h:[d]\to [d']$ so that $i_j=c_{h(j)}$ and $i'_j=c'_{h(j)}$ for all $j\in [d]$. Finally, noting that this also implies that $\lceil c_j/s\rceil=\lceil c'_j/s\rceil$ for all $j\in [d']$, we have (by the choice of $U_1,\ldots,U_l$ above using Lemma \ref{lem:rampgraph}) a function $g:B_{d'}\to [k]$ for which $f(\{c_1,\ldots,c_{d'}\})=f(\{c'_1,\ldots,c'_{d'}\})=g(h)$. Hence, $R(i_1,\ldots,i_d)=R(i'_1,\ldots,i'_d)=g(h)$ as required.
\end{proof}

There is a way that Ramsey type statements can be converted to probabilistic versions, referring to an event relating to objects (in our case inlays) chosen using an underlying probability distribution. We will use the following version. The reason for using the extra parameters $\alpha_j$ and $\beta_j$ will become clear later on (it will be the result of invoking Lemma \ref{lem:densecube} in the proof of Lemma \ref{lem:appclose} below, on the way towards proving Theorem \ref{thm:pix}).

\begin{lemma}\label{lem:probinlay}
For every $l$, $s\geq d$ and $k$ there exists $\delta(l,s,d,k)>0$ with the following property. Let $F:\mathbb{I}^d\to [k]$ be a measurable function, let $0<\alpha_a<\beta_a\leq 1$ be parameters for $a\in [l]$ (relating to the intervals $\mathbb{I}_{1,l},\ldots,\mathbb{I}_{l,l}$ respectively), and let $R$ be a random $s$ over $[l]^d$ inlay of $f$ chosen in the following manner: For every $a\in [l]$, $\alpha_a\leq x_{a,1}<\cdots<x_{a,s}\leq \beta_a$ are chosen uniformly at random, and $R$ is defined (as per Definition \ref{def:inlay}) by $R(i_1,\ldots,i_d)=F((a_1-1+x_{a_1,b_1})/l,\ldots,(a_d-1+x_{a_d,b_d})/l)$ where $a_j=\lceil i_j/s \rceil$ and $b_j=i_j-(a_j-1)s$. With probability at least $\delta$ the inlay $R$ will be $l$-part homogeneous.
\end{lemma}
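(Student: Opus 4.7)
The plan is to reduce this probabilistic statement to the deterministic Ramsey content of Lemma \ref{lem:raminlay} through a coupling with an enlarged sample. I set $t = r(l,s,d,k)$ and aim for $\delta(l,s,d,k) = 1/\binom{t}{s}^l$.

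I would introduce the following auxiliary experiment. Simultaneously with the $s$ ordered points $\alpha_a \leq x_{a,1} < \cdots < x_{a,s} \leq \beta_a$ in each interval $a \in [l]$, draw $t-s$ additional points uniformly and independently from $[\alpha_a,\beta_a]$ (all choices mutually independent across $a$), and merge-sort all $t$ points in each interval to obtain $\alpha_a \leq y_{a,1} < \cdots < y_{a,t} \leq \beta_a$. Record the positions $1 \leq h_{a,1} < \cdots < h_{a,s} \leq t$ for which $x_{a,j} = y_{a,h_{a,j}}$. The $y_{a,j}$ define a discrete function $S:[tl]^d \to [k]$ by $S(j_1,\ldots,j_d) = F((a_1-1+y_{a_1,b_1})/l,\ldots,(a_d-1+y_{a_d,b_d})/l)$, where $a_j = \lceil j_j/t \rceil$ and $b_j = j_j - (a_j-1)t$. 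By construction, the $s$ over $[l]^d$ inlay $R$ of $F$ determined by the $(x_{a,j})$ is literally the same function on $[sl]^d$ as the $s$ from $t$ over $[l]^d$ inlay of $S$ determined by the positions $(h_{a,j})$; in particular it is $l$-part homogeneous in the sense of Definition \ref{def:intphomo} whenever either one is.

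The key step is symmetry. Conditionally on the multisets of $t$ values in each interval, the tuple $(h_{a,1},\ldots,h_{a,s})_{a\in[l]}$ is uniformly distributed over $\binom{t}{s}^l$ possibilities, because within each interval the $t$ points are iid uniform on $[\alpha_a,\beta_a]$, so the $s$-subset carrying the ``original'' labels is uniformly random by exchangeability, and the intervals are independent. For each realization of the $y_{a,j}$, Lemma \ref{lem:raminlay} (applied with this $t$, which satisfies $t \geq r(l,s,d,k)$) furnishes at least one choice $(h^*_{a,j})$ yielding an $l$-part homogeneous $s$ from $t$ over $[l]^d$ inlay of $S$. Hence conditionally the event $(h_{a,j}) = (h^*_{a,j})$ has probability at least $1/\binom{t}{s}^l$. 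The marginal distribution of the $(x_{a,j})$ in the coupled experiment is exactly the distribution described in the lemma (taking an iid subsample of iid points), so taking expectations yields $\delta = 1/\binom{t}{s}^l > 0$.

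The main obstacle is purely bookkeeping: verifying that the inlay of $F$ built from the $(x_{a,j})$ truly coincides with the inlay of $S$ built from the $(h_{a,j})$ follows by chasing Definitions \ref{def:inlay} and \ref{def:intinlay}, but requires carefully aligning three indexing schemes (position within the $s$-tuple, position within the $t$-tuple, and the interval index $a$). Measurability of the ``$l$-part homogeneous'' event is not a concern, since it is a finite conjunction of equalities $R(i) = R(i')$ between values of $F$ at randomly chosen arguments.
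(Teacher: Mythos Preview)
Your proposal is correct and is essentially the paper's own argument: set $t=r(l,s,d,k)$, couple the $s$-sample with a $t$-sample via exchangeability, apply Lemma~\ref{lem:raminlay} to the $t$-sample, and conclude with $\delta=1/\binom{t}{s}^l$. The only cosmetic difference is the direction of the coupling---the paper samples $t$ ordered points first and then sub-selects $s$ of them uniformly, whereas you sample the $s$ points first and then augment with $t-s$ extras---but these are two descriptions of the same joint law.
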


\begin{proof}
We consider an alternative way to choose $\alpha_a\leq x_{a,1}<\cdots<x_{a,s}\leq \beta_a$ for $a\in [l]$: For $t=r(l,s,d,k)$ (using the function of Lemma \ref{lem:raminlay}), we first uniformly choose $\alpha_a\leq y_{a,1}<\cdots<y_{a,t}<\beta_a$. Then for every $a\in [l]$ we choose uniformly (from the possible $\binom{t}{s}$ choices) $1\leq j_{a,1}<\cdots<j_{a,s}\leq t$, and set $x_{a,i}=y_{a,j_{a,i}}$ for all $i\in [s]$ and $a\in [l]$.

Now invoking Lemma \ref{lem:raminlay}, we know that for every choice of $\alpha_a\leq y_{a,1}<\cdots<y_{a,t}<\beta_a$, there exist for every $a\in [l]$ some $1\leq h_{a,1}<\cdots<h_{a,s}\leq t$, so that the inlay $R'$ defined by $R'(i_1,\ldots,i_d)=F((a_1-1+y_{a_1,h_{a_1,b_1}})/l,\ldots,(a_d-1+y_{a_d,h_{a_d,b_d}})/l)$ with $a_j=\lceil i_j/s \rceil$ and $b_j=i_j-(a_j-1)s$ is $l$-part homogeneous. Namely, this would be the $l$-part homogeneous $s$ from $t$ over $[l]^d$ inlay of $S$ guaranteed by Lemma \ref{lem:raminlay}, where $S$ is the $t$ over $[l]^d$ inlay of $F$ defined by $S(i_1,\ldots,i_d)=F((a_1-1+y_{a_1,b_1})/l,\ldots,(a_d-1+y_{a_d,b_d})/l)$ with $a_j=\lceil i_j/t \rceil$ and $b_j=i_j-(a_j-1)t$.

We finally set $\delta=1/\binom{t}{s}^l$, the probability that every $j_{a,i}$ is identical to its respective $h_{a,i}$.
\end{proof}

\section{Proof of the main result}\label{sec:proof}

Using Lemma \ref{lem:probinlay} in conjunction with Lemma \ref{lem:densecube}, we can use an $l$-part homogeneous $G:\mathbb{I}^d\to [k]$ that is close to a given $F:\mathbb{I}^d\to [k]$ (but with no other guarantees, such as the one given by Lemma \ref{lem:allpix}), to find an arbitrarily large $l$-part homogeneous discrete function $R:[sl]^d\to [k]$ that appears with positive probability in $F$, and is compatible with some $G':\mathbb{I}^d\to [k]$ that is not extremely further from $F$ as compared to $G$. The following lemma formalizes this. Note that there is a dependency of $G'$ on $s$ in the formulation below, but it will be mitigated later on.

\begin{lemma}\label{lem:appclose}
Suppose that $F:\mathbb{I}^d\to [k]$ is any measurable function and that $G:\mathbb{I}^d\to [k]$ is an $l$-part homogeneous function. For every $s\geq d$, there exists an $l$-homogeneous function $R:[sl]^d\to [k]$ that appears with positive probability in $F$, so that the $l$-homogeneous function $G':\mathbb{I}^d\to [k]$ that is compatible with $R$ (which is unique by Observation \ref{obs:onecomp}) satisfies $d(F,G')\leq 2d(F,G)+\binom{d}{2}/l$.
\end{lemma}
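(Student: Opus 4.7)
The plan is to use Lemma~\ref{lem:probinlay} to obtain a random inlay whose underlying function $R$ is $l$-part homogeneous with probability at least $\delta(l,s,d,k)>0$, while choosing the parameters $\alpha_a,\beta_a$ via Lemma~\ref{lem:densecube} so that, with positive probability, the resulting $G'$ also achieves $d(F,G') \leq 2\eta + \binom{d}{2}/l$, where $\eta=d(F,G)$.

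I would begin by attaching to each random inlay a natural $l$-part homogeneous auxiliary function $\tilde G':\mathbb{I}^d\to[k]$ as follows: draw, for each order-refined cell $(c,o)$ of $\mathbb{I}^d$, a uniformly random valid representative $(b_1,\ldots,b_d)\in[s]^d$ (valid meaning the $b$'s respect the order $o$ within each block of coordinates lying in the same $a$-interval), and set $\tilde G'$ on that order-refined cell to $F$ evaluated at the inlay point $((a_j-1+y_{a_j,b_j})/l)_j$. The key structural observation is that when $R$ is $l$-part homogeneous, $\tilde G'$ does not depend on the $b$-choices and coincides with the unique $G'$ compatible with $R$ from Observation~\ref{obs:onecomp}.

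Next I would use exchangeability of the sampled points within each $\mathbb{I}_{a,l}$ to show that, under the uniform-representative distribution, the inlay point in each order-refined cell is distributed uniformly on that cell. Combined with the triangle inequality $d(F,\tilde G') \leq d(F,G) + d(G,\tilde G')$, a direct cell-by-cell computation then gives
\[
\mathbb{E}[d(F,\tilde G')] \leq \eta + \eta + \binom{d}{2}/l = 2\eta + \binom{d}{2}/l,
\]
where the non-diagonal contribution to $\mathbb{E}[d(G,\tilde G')]$ telescopes to $\sum_{c\text{ non-diag}}\lambda(\{F\neq g_c\}\cap c)\leq\eta$ via uniformity of the inlay, and the diagonal contribution is bounded trivially by the total measure $\binom{d}{2}/l$ of cells $(a_1,\ldots,a_d)\in[l]^d$ having some $a_j=a_{j'}$.

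Finally, to convert this expected bound into a positive-probability statement jointly with homogeneity, I would invoke Lemma~\ref{lem:densecube} on the agreement set $\{F=G\}$ to obtain a box in which $F=G$ holds with conditional probability arbitrarily close to $1$, and then pick $\alpha_a,\beta_a$ so that the inlay is confined to the projections of this dense box. The non-diagonal contribution to $\mathbb{E}[d(G,\tilde G')]$ then shrinks well below the $\delta$-fraction of the target bound, so by Markov the event $d(F,\tilde G')>2\eta+\binom{d}{2}/l$ occurs with probability strictly less than $\delta$, hence strictly less than $\Pr[R\text{ is }l\text{-part homogeneous}]$; the two events cannot jointly exhaust the sample space, and a specific inlay realizing both conditions exists.

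The main obstacle will be the third step: Lemma~\ref{lem:densecube} produces a single box in $\mathbb{I}^d$, whereas Lemma~\ref{lem:probinlay} takes per-$a$ intervals $[\alpha_a,\beta_a]$ indexed by $a\in[l]$. The technical work is to reconcile the two by localizing the dense box to a single cell of the $[l]^d$-partition, iterating the density lemma coordinate-by-coordinate, or case-splitting between the regime of large $\eta$ (where the stated bound already exceeds $1$ and is automatically satisfied) and small $\eta$ (where the density argument applies cleanly).
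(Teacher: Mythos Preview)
Your expectation computation and the overall architecture (combine a high-probability distance event with the probability-$\delta$ homogeneity event from Lemma~\ref{lem:probinlay}) are sound, and the paper follows the same outline. The gap is exactly the one you flag as the ``main obstacle,'' and none of your three suggested repairs closes it. Applying Lemma~\ref{lem:densecube} to $\{F=G\}\subseteq\mathbb{I}^d$ yields a box with $d$ sides; even if that box sits inside a single cell $\prod_j\mathbb{I}_{a_j,l}$, you obtain intervals only for the parts $a_1,\ldots,a_d$ and nothing for the others, while Lemma~\ref{lem:probinlay} needs one interval for \emph{each} $a\in[l]$. Iterating ``coordinate by coordinate'' does not help, since the $d$ coordinates of $\mathbb{I}^d$ are not in correspondence with the $l$ parts. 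And your Markov step would require the confined non-diagonal expectation to drop below $\delta\eta$; but $d(G,\tilde G')$ is a \emph{global} integral over $\mathbb{I}^d$, whereas your invocation of the density lemma only controls $F$ versus $G$ inside one small box, so there is no mechanism that forces that global expectation down.

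The paper's fix is to apply Lemma~\ref{lem:densecube} in dimension $l$, not $d$. One first considers the ``$1$ over $[l]^d$ inlay'' determined by a single uniform point $(x_1,\ldots,x_l)\in\mathbb{I}^l$, and notes that the number of non-diagonal tuples $(i_1,\ldots,i_d)\in[l]^d$ on which this inlay disagrees with $G$ has expectation at most $\eta\,l^d$; hence the event that this count is at most $\eta\,l^d$ has positive probability in $\mathbb{I}^l$. Lemma~\ref{lem:densecube} applied to \emph{this} event produces a box $\prod_{a=1}^l[\alpha_a,\beta_a]\subseteq\mathbb{I}^l$, which is precisely the per-part product of intervals that Lemma~\ref{lem:probinlay} requires, and inside which the good event holds with conditional probability at least $1-\delta/2s^d$. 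A union bound over the $s^d$ ways of selecting one point per part from the $s$-inlay then gives the count bound for the full $s$-inlay with probability at least $1-\delta/2$, yielding $d(G,G')\leq\eta+\binom{d}{2}/l$ and hence $d(F,G')\leq 2\eta+\binom{d}{2}/l$. No Markov inequality on $d(F,\tilde G')$ is needed: the density lemma is used to boost a merely-positive probability to one arbitrarily close to $1$.
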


\begin{proof}
We first note that it is enough to show that an $s$ over $[l]^d$ inlay of $F$, chosen at random as per the distribution in Lemma \ref{lem:probinlay} for some $\alpha_1,\ldots,\alpha_l$ and $\beta_1,\ldots,\beta_l$, satisfies the assertions of this lemma with some positive probability $\delta'$. Once we prove this, we note that the above implies that there exists a single function $R:[sl]^d\to [k]$ satisfying the assertion of the lemma that appears with some positive probability $\delta''$, since there is only a finite number of $l$-homogeneous functions $R:[sl]^d\to [k]$. But then, by the chain rule, this means that $\mu_{F,sl}(R)\geq \delta''l^{-sl}\prod_{j=1}^l(\beta_j-\alpha_j)^s>0$.

For the rest of the proof we will show the existence of $\alpha_1,\ldots,\alpha_l$ and $\beta_1,\ldots,\beta_l$ that ensure that the above event happens with $\delta'\geq\delta(l,s,d,k)/2$, where $\delta$ is the function of Lemma \ref{lem:probinlay}. We first analyze a different probability space $\tilde\mu_{F,l}$ over functions $T:[l]^d\to [k]$. This space is defined as the result of choosing uniformly and independently $0<x_i\leq 1$ for every $i\in [l]$, and defining $T$ by $T(i_1,\ldots,i_d)=F((i_1-1+x_{i_1})/l,\ldots,(i_d-1+x_{i_d})/l)$ for $(i_1,\ldots,i_d)\in [l]^d$ (note that this can be viewed as a random ``$1$ over $[l]^d$ inlay''of $F$).

Consider now the set of tuples without repetitions, $I=\{(i_1,\ldots,i_d)\in [l]^d:|\{i_1,\ldots,i_d\}|=d\}$. We analyze probabilistic bounds for the number of $(i_1,\ldots,i_d)\in I$ for which $T(i_1,\ldots,i_d)\neq G((i_1-1+x_{i_1})/l,\ldots,(i_d-1+x_{i_d})/l)$. For this we define $A=\{(x_1,\ldots,x_d)\in\mathbb{I}^d:F(x_1,\ldots,x_d)\neq G(x_1,\ldots,x_d)\}$. For a tuple $(i_1,\ldots,i_d)\in I$, the probability for having $T(i_1,\ldots,i_d)\neq G((i_1-1+x_{i_1})/l,\ldots,(i_d-1+x_{i_d})/l)$ is exactly $l^d\lambda(A\cap \prod_{j=1}^d\mathbb{I}_{i_d,l})$.

Hence, the expected size of the ``set of differences'' $I'=\{(i_1,\ldots,i_d)\in I:T(i_1,\ldots,i_d)\neq G((i_1-1+x_{i_1})/l,\ldots,(i_d-1+x_{i_d})/l)\}$ is at most $\lambda(A)l^d=l^d\cdot d(F,G)$. This means that with positive probability we have $|I'|\leq l^d\cdot d(F,G)$. We use Lemma \ref{lem:densecube} over the choice of $x_1,\ldots,x_l$ with $l$ as the dimension, to obtain $\alpha_1,\ldots,\alpha_l$ and $\beta_1,\ldots,\beta_l$, so that when we condition on $\bigwedge_{j=1}^l\alpha_j\leq x_j\leq \beta_j$, we obtain $|I'|\leq l^d\cdot d(F,G)$ with probability at least $1-\delta(l,s,d,k)/2s^d$.

We now consider the probability space of choosing $\alpha_i\leq x_{a,1}<\cdots<x_{a,s}\leq \beta_i$ uniformly for every $a\in [l]$, and defining $R$ by $R(i_1,\ldots,i_d)=F((a_1-1+x_{a_1,b_1})/l,\ldots,(a_d-1+x_{a_d,b_d})/l)$ where $a_j=\lceil i_j/s \rceil$ and $b_j=i_j-(a_j-1)s$. To analyze further, consider the alternative process of choosing $\alpha_a\leq y_{a,j}\leq \beta_a$ uniformly and independently for $a\in [l]$ and $j\in [s]$, and then sorting $y_{a,1},\ldots,y_{a,s}$ to obtain $x_{a,1}<\cdots<x_{a,s}$ for every $a\in [l]$.

By the choices of $\alpha_1,\ldots,\alpha_l$ and $\beta_1,\ldots,\beta_l$, for every fixed $(j_1,\ldots,j_d)\in [s]^d$, with probability at least $1-\delta(l,s,d,k)/2s^d$ we have $|\{(i_1,\ldots,i_d)\in I:R((i_1-1)s+j_1,\ldots,(i_d-1)s+j_d)\neq G((i_1-1+y_{i_1,j_1})/l,\ldots,(i_d-1+y_{i_d,j_d})/l)\}|\leq d(F,G)\cdot l^d$. By a union bound, with probability at least $1-\delta(l,s,d,k)/2$ all these events happen at once, and then by summing over all $(j_1,\ldots,j_d)\in [s]^d$ we get $|\{(i_1,\ldots,i_d,j_1,\ldots,j_d)\in I\times [s]^d:R((i_1-1)s+j_1,\ldots,(i_d-1)s+j_d)\neq G((i_1-1+x_{i_1,j_1})/l,\ldots,(i_d-1+x_{i_d,j_d})/l)\}|\leq d(F,G)\cdot (sl)^d$ (we used here the fact that $x_{i,j}$ are obtained from respective permutations of $y_{i,j}$). By another union bound, with probability at least $\delta(l,s,d,k)/2$ the above event happens concurrently with $R$ being $l$-part homogeneous.

Now consider the $l$-part homogeneous function $G':\mathbb{I}^d\to [k]$ that is compatible with $R$. Noting the above bound, together with $|[l]^d\setminus I|\leq \binom{d}{2}l^{d-1}$, when the above events all happen we obtain $d(G,G')\leq d(F,G)+\binom{d}{2}/l$, and hence by the triangle inequality $d(F,G')\leq 2d(F,G)+\binom{d}{2}/l$.
\end{proof}

We are finally ready for the proof of the main result.

\begin{proof}[Proof of Theorem \ref{thm:pix}]
Given $F:\mathbb{I}^d\to [k]$ and $\epsilon$, we first use Lemma \ref{lem:allpix} to obtain some $G:\mathbb{I}^d\to [k]$ that is $l$-part homogeneous for $l\geq 3\binom{d}{2}/\epsilon$ and satisfies $d(F,G)\leq \epsilon/3$.

We next use Lemma \ref{lem:appclose} to obtain  $G':\mathbb{I}^d\to [k]$ that is compatible with an $l$-part homogeneous $R':[sl]^d\to [k]$ for which $\mu_{F,sl}(R')>0$, and satisfies $d(F,G')\leq 2d(F,G)+\binom{d}{2}/l\leq \epsilon$. We do this for every $s\geq d$, and for every $s$ we may obtain a different $G'$. However, since by Observation \ref{obs:fin} there is a finite number of possible $G'$, we can pick a single $G'$ for which this holds for an infinite sequence of possible $s$.

To complete the proof, it remains to show that no $S:[n]^d\to [k]$ for which $\mu_{F,n}(S)=0$ appears in $G'$. Assuming on the contrary that there exists such an $S$, we note that in particular $S$ also appears in the $l$-part homogeneous $R':[n'l]^d\to [k]$ that is compatible with $G'$ for any $n'\geq n$, and by the above choice of $G'$ we know that there exists such an $n'$ for which $\mu_{F,n'l}(R')>0$. Now recall that we can view the probability space $\mu_{F,n}$ as the result of first choosing $T:[n'l]^d\to [k]$ according to $\mu_{F,n'l}$, then choosing $1\leq j_1<\cdots<j_n\leq n'l$ uniformly (from the $\binom{n'l}{n}$ possible choices), and finally setting $R:[n]\to [k]$ by defining $R(i_1,\ldots,i_d)=T(j_{i_1},\ldots,j_{i_d})$. But this implies that $\mu_{F,n}(S)\geq\mu_{F,n'l}(R')/\binom{n'l}{n}>0$, a contradiction.
\end{proof}

\section{Discussion and variants}\label{sec:disc}

\subsection*{Relation to the original pixelation lemma}

The pixelation lemma in \cite{orderon} was stated specifically for functions $F:\mathbb{U}^4\to \mathbb{U}$, where $\mathbb{U}$ denotes the closed interval $\{x\in\mathbb{R}:0\leq x\leq 1\}$. Also, the definition of appearance is different (there are more ways for a structure to appear in $F$).

Specifically, the domain is interpreted as $(\mathbb{U}\times \mathbb{U})^2$, and $F$ is considered as a {\em binary} relation with ``fractional elements'' (so a single ``vertex'' corresponds to the set $\{a\}\times \mathbb{U}$ for some $0\leq a\leq 1$).  Thus, in the definition of an appearance of a structure (here a vertex-ordered graph), the order relation between the first and the third coordinates is the only one taken into consideration, while for the second and fourth coordinate we are only concerned about whether they can be chosen from a positive probability set (for a positive probability appearance).

Additionally, the range is interpreted as corresponding to a single relation (essentially a vertex-ordered simple graph), corresponding to a function $E:[n]\to\{0,1\}$. The definition of the probability for appearance involves a final step, where for example the value $F((x_1,a),(x_2,b))$ provides the probability that $E(1,2)=1$ (as opposed to $E(1,2)=0$), following the random choice of $0\leq x_1<x_2\leq 1$ and $a,b\in \mathbb{U}$ (independently). Accordingly, the approximation guarantee is phrased in terms of the $L_1$ distance, $d(F,G)=\int_{(x,a,y,b)\in\mathbb{U}^4}|F((x,a),(y,b))-G((x,a),(y,b))|d(x,a,y,b)$.

This can be converted back to functions with discrete ranges by standard ``quantization'', approximating a real value by the closest multiple of $\frac1k$ for some large enough $k$ (and making sure that the ``edge values'' $0$ and $1$ are preserved). Also note that recently in \cite{towsner} it was observed that for the purpose of representing limit objects (including vertex-ordered graphs, and also plain graphs as in the large body of work presented in \cite{graphons}), there is a way of altogether doing away with values outside $0$ and $1$ (in the ordered regime the additional unordered coordinates are still necessary).

Finally, the lemma in \cite{orderon} is restricted to symmetric relations, in the sense that $F((x,a),(y,b))=F((y,b),(x,a))$ for every $x,a,y,b\in\mathbb{U}$. This is not an essential difference, since a non-symmetric relation can be represented by two symmetric relations as long as we have an underlying vertex order. The way Lemma \ref{lem:rampgraph} is converted to Lemma \ref{lem:raminlay} in fact demonstrates how such representations can be constructed for relations of any arity. However, after such a conversion we must still distinguish which of the coordinates are equal to each other, and so must explicitly handle relations of lower arities. Specifically in \cite{orderon} the role of equality is also very diminished, since there the values $F((x,a),(y,a))$ for $x=y$ are ignored by sticking to the notion of graphs without loops.

The special case pixelation lemma from \cite{orderon} is used there in conjunction with a compactness theorem for the above described limit objects (under a suitable topology, weaker than that of the $L_1$ distance) to derive a removal lemma for vertex-ordered graphs. While the pixelation lemma is generalized to higher arities here, the original concept of limit objects does not generalize that easily. Investigations of limit objects for hypergraphs are present in \cite{elekhyp} and \cite{zhaolim}, and with the addition of a vertex order in \cite{towsner}. There, a replacement for the pixelation lemma is used that still guarantees its most useful property in that context, namely the assurance that all appearances in the converted function are of positive probability and come from positive probability appearances in the original function.

\subsection*{Dealing with diagonals and lower arity relations}

Recall that for $k=2^r$, a function $F:\mathbb{I}^d\to [k]$ can give the information about a model (over $\mathbb{I}$) of $r$ arity $d$ relations. Recall also the earlier comment that lower arity relations can be represented by making them invariant over the last coordinates. Thus an arity $d'$ relation for $d'<d$ can be replaced by an arity $d$ relation, if we stipulate that for every $x_1,\ldots,x_d$ and $x'_{d'+1},\ldots,x'_d$ we have the equivalence $R(x_1,\ldots,x_d)\leftrightarrow R(x_1,\ldots,x_{d'},x'_{d'+1},\ldots,x'_d)$.

When we move to modeling with a single function $F$, this condition can be converted to stipulating that certain structures do not appear. If for example the relation in question is the first relation in the vocabulary, meaning that it is represented by $F(x_1,\ldots,x_d) \pmod{2}$, then the additional ``forbidden structures'' are all $S:[2]^d\to [k]$ for which $S(1,\ldots,1)\neq S(1,\ldots,1,i_{d'+1},\ldots,i_d) \pmod{2}$ for any $i_{d'+1},\ldots,i_d$ that take values in $\{1,2\}$.

Thus, the $l$-part homogeneous $G$ that results from Theorem \ref{thm:pix} will also satisfy the condition that makes it conform to a relation of arity $d'$. Additionally, the way this relation is modeled (being invariant of the last $d-d'$ coordinates), a distance bound between $F$ and $G$ in terms of the Lebesgue measure over $\mathbb{I}^d$ translates to a corresponding distance bound that applies to the original relation in terms of the measure over $\mathbb{I}^{d'}$.

It may at times be useful to also ensure for an arity $d$ relation that measures are preserved for the restrictions to ``diagonals'', i.e., when there are equality constraints between coordinates. The logical equivalent is when a relation is used with the same variable appearing in more than one place. For example, when we are dealing with a binary relation, one might want to ensure a small distance also with respect to the measure of the set $\{x\in\mathbb{I}:F(x,x)\neq G(x,x)\}$, where $F$ is the function referring to $R$ and other relations.

The way to ensure such a small distance bound is by constructing a unary relation $U(x)$, and adding the condition $U(x)\leftrightarrow R(x,x)$ for all $x$. This can be converted to a condition about certain substructures not appearing in $F$. The relation $U$ can then be converted back to a binary relation as explained above and added to the encoding by $F$, to make sure that the small distance bound from $G$ applies to it.

\subsection*{Removing the order dependency at a cost}

Recall that by the comment following the statement of Theorem \ref{thm:pix}, to ensure the exclusion of structures that do not appear in the original function $F$ it is necessary that the $l$-part homogeneous function $G:\mathbb{I}^d\to [k]$ has a dependency on the order between the coordinate values. This is relevant in sets of type $\prod_{j=1}^l\mathbb{I}_{i_j,l}$ whenever $i_1,\ldots,i_l$ are not all different. If we insist that we want a ``completely pixelated'' $G$, which is constant over all sets of the type $\prod_{j=1}^l\mathbb{I}_{i_j,l}$, we have to alter the exclusion requirement.

For example, suppose that we have a single relation $R$ of arity $2$ and we are interested in substructures with $2$ elements. Then we would look at the quartet $(R(x,x),R(x,y),R(y,x),R(y,y))$ for any $x<y$. But if we count ``homomorphisms'' as structures to be excluded as well, we would also look at $(R(x,x),R(x,x),R(x,x),R(x,x))$, corresponding to the case $x=y$, and consider the measure of the set of $x\in\mathbb{I}$ that provide a certain value.

If ``substructures with equalities'' are also considered as substructures that can appear with positive probability in $F$, then we can have a completely pixelated $H$ with the following procedure: We start with the $G$ provided by Theorem \ref{thm:pix}, but then for every $(i_1,\ldots,i_d)\in [l]^d$ for which some indexes are equal, we replace the values of $G$ over ${\prod_{j=1}^l\mathbb{I}_{i_j,l}}$ with the constant equal to $G((i_1+\frac12)/l,\ldots,(i_d+\frac12)/l)$. This is the same as ``stipulating'' that whenever $x_j$ and $x_j'$ {\em can} be equal (since $i_j=i_j'$), they {\em must} be equal.

The new structures appearing in $H$ might not appear according to the original definition of appearance in $F$, but they must all appear in $F$ when we allow equalities among $x_1,\ldots,x_n$ in the definition of appearance. In fact they appear with positive probability when we condition the $[n]^d$-statistic distribution $\mu_{F,n}$ on the event of the respective equalities occurring among $x_1,\ldots,x_n$.

The original lemma in \cite{orderon} does not have an order dependency, but this is a benefit of dealing only with symmetric binary structures without loops (which correspond to equalities). The version of Ramsey's theorem used in its proof there is also much lighter than the one developed here. If we only ignore orders, for example allowing only the hard-coded relations ``$=$'' and ``$\neq$'' instead of ``$\leq$'', then we would obtain a lemma where the value of $F:(x_1,\ldots,x_d)$ depends only on the partition of $x_1,\ldots,x_d$ into parts with equal values. For arities larger than $2$, its proof would still require the version of Ramsey's theorem developed here.

\subsubsection*{Containment in the other direction}

Considering the function $F:\mathbb{I}^d\to [k]$, Theorem \ref{thm:pix} ensures the existence of an $l$-part homogeneous $G:\mathbb{I}^d\to [k]$ (for some $l$) within distance $\epsilon$ of $F$, so that all structures that appear in $G$ already appear in $F$. One can ask whether this can be made bidirectional, so that $G$ will also be guaranteed to contain every structure that appears with positive probability in $F$.

However, this does not hold even for the (rather non-interesting) case of $d=1$. For every natural number $r$, define $\mathbb{J}_r=\mathbb{I}_{2^r,2}$. Note that $\mathbb{J}_1,\mathbb{J}_2,\ldots$ are all disjoint and their union equals $\mathbb{I}$. Then define $F:\mathbb{I}\to\{1,2\}$ by setting $F(x)=1$ if $x\in \mathbb{J}_{r}$ for an even $r$, and setting $F(x)=2$ if $x\in \mathbb{J}_{r}$ for an odd $r$. In this construction, for every $r$ there exists $x_1<\cdots<x_r$ so that $F(x_i)=1$ if and only if $i$ is odd. However, for every $l$, an $l$-part homogeneous function $G:\mathbb{I}\to\{1,2\}$ will not contain such a sequence for any $r>l$.

On the other hand, for every fixed $r$, one can still ensure that the $l$-part homogeneous $G:\mathbb{I}^d\to [k]$ resulting from Theorem \ref{thm:pix} contains all structures of size $r$ that appear with positive probability in $F$. To this end, let $\delta$ be the minimum of $\mu_{F,r}(R)$ over all $R:[r]^d\to [k]$ for which $\mu_{F,r}(R)>0$. Then, deploy Theorem \ref{thm:pix}, replacing the original parameter $\epsilon$ with $\min\{\epsilon,\delta/2r^d\}$.

\bibliographystyle{amsplain}
\bibliography{pixel}

\end{document}